\theoremstyle{plain}
\newtheorem{theorem}{Theorem}
\newtheorem{proposition}[theorem]{Proposition}
\newtheorem{observation}[theorem]{Observation}
\newtheorem{corollary}[theorem]{Corollary}
\newtheorem{lemma}[theorem]{Lemma}
\newcommand*{\col}{\mathrm{col}}
\newcommand*{\sym}{\mathrm{S}}
\newcommand{\mc}[1]{\mathcal{ #1 }}
\def\ldotsplus{\mathinner{\ldotp\ldotp\ldotp\ldotp}}
\def\fourdots{\relax\ifmmode\ldotsplus\else$\m@th \ldotsplus\,$\fi}
\author{Matthias Kriesell \& Anders Sune Pedersen\\
\small  Dept. of Mathematics and Computer Science\\[-0.8ex]
\small  University of Southern Denmark\\[-0.8ex]
\small  Campusvej 55, 5230 Odense M, Denmark\\
\small \texttt{\{asp, kriesell\}@imada.sdu.dk}
}
\title{On graphs double-critical with respect to the~colouring~number}
\begin{document}
\maketitle

\begin{abstract}
The \emph{colouring number $\col(G)$} of a graph $G$ is the smallest
integer $k$ for which there is an ordering of the vertices of $G$ such
that when removing the vertices of $G$ in the specified order no
vertex of degree more than $k-1$ in the remaining graph is removed at
any step. An edge $e$ of a graph $G$ is said to be
\emph{double-$\col$-critical} if the colouring number of $G-V(e)$ is
at most the colouring number of $G$ minus $2$. A connected graph $G$
is said to be \emph{double-$\col$-critical} if each edge of $G$ is
double-$\col$-critical. We characterise the double-$\col$-critical
graphs with colouring number at most $5$. In addition, we prove that
every $4$-$\col$-critical non-complete graph has at most half of its
edges being double-$\col$-critical, and that the extremal graphs are
precisely the odd wheels on at least six vertices. We observe that for
any integer $k$ greater than $4$ and any positive number $\epsilon$,
there is a $k$-$\col$-critical graph with the ratio of
double-$\col$-critical edges between $1- \epsilon$ and $1$.
\end{abstract}

\section{Introduction}
All graphs considered in this paper are assumed to be simple and
finite.\footnote{The reader is referred to~\cite{BondyAndMurty2008}
  for definitions of graph-theoretic concepts used but not explicitly
  defined in this paper.} The cycle on $n$ vertices is denoted by
$C_n$. The complete graph $K_n$ on $n$ vertices is referred to as an
\emph{$n$-clique}. Let $G$ denote a graph. The number of vertices in a
largest clique contained in $G$ is denoted by $\omega(G)$. The
vertex-connectivity of $G$ is denoted by $\kappa(G)$. The number of
vertices and edges in $G$ is denoted by $n(G)$ and $m(G)$,
respectively.

 Given a vertex $v$ in $G$, $N(v,G)$ denotes the set of vertices in
 $G$ adjacent to $v$; $\deg(v, G)$ denotes the cardinality of
 $N(v,G)$, and it is referred to as the \emph{degree} of $v$ (in
 $G$). A vertex of degree $1$ is referred to as a \emph{leaf}. Given a
   subset $S$ of the vertices of $G$, the subgraph of $G$ induced by
   the vertices of $S$ is denoted by $G[S]$, and we let $N(S,G)$
   denote the set $\cup_{s \in S} N(s,G) \setminus S$.  The
   \emph{square of a graph $G$}, denoted by $G^2$, is the graph
   obtained from $G$ by adding edges between any pair of vertices of
   $G$ which are at distance $2$ in $G$. Given two graphs $H$ and $G$,
   the \emph{complete join} of $G$ and $H$, denoted by $G + H$, is the
   graph obtained from two disjoint copies of $H$ and $G$ by joining
   each vertex of the copy of $G$ to each vertex of the copy of
   $H$. The chromatic number of $G$ is denoted by $\chi(G)$, while the
   list-chromatic number of $G$ is denoted by $\chi_\ell(G)$. Let
   $\psi$ denote some graph parameter. An edge $e$ of $G$ is said to
   be \emph{double-$\psi$-critical} if $\psi(G - V(e)) \leq \psi(G) -
   2$. A connected graph $G$ is said to be
   \emph{double-$\psi$-critical} if each edge of $G$ is
   double-$\psi$-critical. For brevity, we may also refer to
   double-$\chi$-critical edges and graphs as, simply,
   \emph{double-critical} edges and graphs, respectively. \\

The introduction of the concept of double-$\psi$-critical graphs
in~\cite{ASP_PhD_Thesis} was inspired by a special case of the
Erd\H{o}s-Lov\'asz Tihany Conjecture~\cite{TihanyProblem2}, namely the
special case which states that the complete graphs are the only
double-critical graphs. We refer to this special case of the
Erd\H{o}s-Lov\'asz Tihany Conjecture as the \emph{Double-Critical
  Graph Conjecture}. The Double-Critical Graph Conjecture is settled
in the affirmative for the class of graphs with chromatic number
at most $5$, but remains unsettled for the class of graphs with chromatic number at least $6$~\cite{JensenToft95, MR995391, MR882614, MR1221590}.

In~\cite{ASP_PhD_Thesis}, it was proved that if $G$ is a
double-$\chi_\ell$-critical graph with $\chi_\ell(G) \leq 4$, then $G$
is complete. It is an open problem whether there is a non-complete
double-$\chi_\ell$-critical graph with list-chromatic number at least
$5$.

The double-$\kappa$-critical graphs, which in the literature are
referred to as \emph{contraction-critical} graphs (since the
vertex-connectivity drops by one after contraction of any edge), are
well-understood in the case where $\kappa$ is $4$. Some structural
results have been obtained for contraction-critical graphs with
vertex-connectivity $5$. (See \cite[Sec. 4]{MR1892444} for references
on contraction-critical graphs.)

Bjarne Toft\footnote{Private communication to the second author from
  Bjarne Toft, Odense, August 2008.} posed the problem of
characterising the double-$\col$-critical graphs. Here $\col$ denotes
the colouring number which is defined in the paragraph below.

In this paper, we characterise the double-$\col$-critical graphs with
colouring number at most $5$.

In the remaining part of this section, we define the colouring
number and present some fundamental properties of this graph
parameter.
\paragraph{The colouring number of a graph.} Suppose that we are given a non-empty graph $G$ and an ordering $v_1,
 \ldots, v_n$ of the vertices of $G$\label{page:ErdosHajnalCol}. Now
 we may colour the vertices of $G$ in the order $v_1, \ldots, v_n$
 such that in the $i$th step the vertex $v_i$ is assigned the smallest
 possible positive integer which is not assigned to any neighbour of
 $v_i$ among $v_1, \dots, v_{i-1}$. This produces a colouring of $G$
 using at most
$$\max_{i \in \{1, \dots, n \}} \deg(v_i, G[v_1, \ldots, v_i])  + 1$$
colours. Taking the minimum over the set $\sym_n$ of all permutations of
$\{1, \dots, n\}$, we find that the chromatic number of $G$ is at most
\begin{equation} \label{eq:987qwenlasdf923}
\min_{\pi \in \sym_n} \left\{ \max_{i \in \{1, \dots, n\}} \deg(v_{\pi(i)}, G[v_{\pi(1)}, \ldots, v_{\pi(i)}])  \right\} + 1
\end{equation}
The number in~\eqref{eq:987qwenlasdf923} is called the \emph{colouring
  number} of $G$, and it is denoted by $\col(G)$. The colouring number
of the empty graph $K_0$ is defined to be
zero. By~\eqref{eq:987qwenlasdf923}, $\col(G) \leq \Delta(G)+1$ for
any graph $G$. The colouring number was introduced by Erd\H{o}s and
Hajnal~\cite{MR0193025}, but equivalent concepts were introduced
independently by several other authors. It can be shown (see, for
instance,~\cite{MR1373659}) that the colouring number of any non-empty
graph $G$ is equal to
\begin{equation} \label{eq:u8934jkasf3498} \max \{ \delta (H) \mid H
  \textnormal{ is an induced subgraph of } G \} + 1
\end{equation}
and that the colouring number can be computed in polynomial
time~\cite{MR709826}. The non-empty graphs with colouring number at
most $k+1$ are also said to be
\emph{$k$-degenerate}~\cite{MR0266812}. Thus, a non-empty graph $G$ is
$k$-degenerate if and only if there is an ordering of the vertices of
$G$ such that when removing the vertices of $G$ in the specified order
no vertex of degree more than $k$ in the remaining graph is removed at
any step. We may think of a $k$-degenerate graph as a graph that can
be `degenerated' to the empty graph by removing vertices of degree at
most $k$.

The colouring number is monotone on subgraphs, that is, if $F$ is a
subgraph of a graph $G$ then $\col(F) \leq \col(G)$. For ease of
reference, we state the following elementary facts concerning the
colouring number of graphs.

\begin{observation} \label{obs:3457089asdflkjh345} For any graph $G$,
\begin{itemize} 
\item[\emph{(i)}] $\col(G)=0$ if and only if $G$ is the empty graph, 
\item[\emph{(ii)}] $\col(G)=1$ if and only if $G$ contains at least one vertex but no edges,
\item[\emph{(iii)}] $\col(G)=2$ if and only if $G$ is forest containing at
  least one edge, and
\item[\emph{(iv)}] $\col(G)\geq 3$ if and only if $G$ contains at least one cycle.
\end{itemize}
\end{observation}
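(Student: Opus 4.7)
The plan is to rely on characterisation~\eqref{eq:u8934jkasf3498}, which expresses $\col(G)$ for non-empty $G$ as $1+\max\{\delta(H):H\text{ an induced subgraph of }G\}$. Each of the four items then reduces to a short minimum-degree computation over induced subgraphs.

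For (i), $\col(K_0)=0$ is the definition; conversely, for any non-empty $G$ one takes $H:=G$ to get $\delta(H)\geq 0$, so $\col(G)\geq 1$. For (ii), a non-empty edgeless graph has every induced subgraph edgeless (so $\delta=0$ throughout) and hence $\col=1$; conversely, any edge $uv$ yields the induced subgraph $G[\{u,v\}]$ with $\delta=1$, forcing $\col(G)\geq 2$.

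For (iv), a \emph{shortest} cycle of $G$ is automatically chordless and thus appears as an induced subgraph $C$ with $\delta(C)=2$, whence $\col(G)\geq 3$. Conversely, if $\col(G)\geq 3$ then some induced subgraph $H$ satisfies $\delta(H)\geq 2$, and a standard greedy walk in $H$ (at each step choosing an unvisited neighbour, which exists since $\delta(H)\geq 2$, and invoking finiteness) exhibits a cycle in $H$ and hence in $G$. Item (iii) now drops out by combining (i), (ii) and (iv): $\col(G)=2$ if and only if $G$ is non-empty, contains at least one edge, and contains no cycle, which is precisely the class of forests containing at least one edge.

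The whole observation is a direct unwrapping of definitions, so there is no real obstacle. The only mild care point is in (iv), where the $\delta=2$ witness must be produced as an \emph{induced} subgraph; this is exactly why one takes a \emph{shortest} cycle, which is forced to be chordless.
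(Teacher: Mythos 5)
Your proof is correct. The paper states this observation without proof, treating it as an elementary consequence of the characterisation \eqref{eq:u8934jkasf3498}, and your argument --- shortest cycles being chordless for the forward direction of (iv), minimum degree at least $2$ forcing a cycle for the converse, and (iii) obtained by elimination from (i), (ii) and (iv) --- is exactly the standard unwrapping the authors intend.
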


 A graph $G$ is said to be \emph{$k$-$\col$-critical}, or, simply,
 \emph{$\col$-critical}, if $\col (G) = k$ and $\col (F) < k$ for
 every proper subgraph $F$ of $G$. Similarly, a graph $G$ is said to
 be \emph{$k$-$\col$-vertex-critical}, or, simply,
 \emph{$\col$-vertex-critical}, if $\col(G) = k$ and $\col(F) < k$ for
 every induced proper subgraph $F$ of $G$. It is easy to see that
 every connected $r$-regular graph is $(r+1)$-$\col$-critical.

\begin{observation} \label{obs:elementaryColCritical} For any
  $\col$-vertex-critical graph $G$,
\begin{itemize} 
\item[\emph{(i)}] $\col(G)=0$ if and only if $G \simeq K_0$,
\item[\emph{(ii)}] $\col(G)=1$ if and only if $G \simeq K_1$,
\item[\emph{(iii)}] $\col(G)=2$ if and only if $G \simeq K_2$, and
\item[\emph{(iv)}] $\col(G)=3$ if and only if $G$ is a cycle.
\end{itemize}
\end{observation}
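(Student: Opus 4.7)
My plan is to derive each part from Observation~\ref{obs:3457089asdflkjh345} together with the definition of $\col$-vertex-criticality. In each case, Observation~\ref{obs:3457089asdflkjh345} describes all graphs with the specified colouring number, and vertex-criticality singles out the minimal such graph under taking induced subgraphs.

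Parts (i), (ii), and (iii) follow almost immediately. For (i), the unique graph with $\col=0$ is $K_0$. For (ii), any edgeless graph with at least two vertices remains edgeless after a vertex deletion and still has $\col\geq 1$, so the only vertex-critical candidate is $K_1$. For (iii), Observation~\ref{obs:3457089asdflkjh345}(iii) gives $\col(G)=2$ iff $G$ is a forest containing an edge; if $|V(G)|\geq 3$, then some vertex $v$ is not incident to a fixed edge $e$, and $G-v$ still contains $e$ and hence has $\col\geq 2$, violating criticality. Thus $G\simeq K_2$.

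The substance of the observation lies in part (iv). For the backward direction, $C_n$ has $\col=3$ by Observation~\ref{obs:3457089asdflkjh345}(iv), and any induced proper subgraph of $C_n$ is a disjoint union of paths, hence a forest with $\col\leq 2$. For the forward direction, suppose $G$ is $\col$-vertex-critical with $\col(G)=3$. By Observation~\ref{obs:3457089asdflkjh345}(iv), $G$ contains a cycle, and by criticality $G-v$ is a forest for every $v\in V(G)$. I would then show $G$ is a cycle by excluding three obstructions in turn: first, $G$ is connected, since otherwise a vertex in a component without a cycle could be removed while preserving some cycle; second, $G$ has no leaf, since a leaf lies on no cycle of $G$ and so its deletion preserves all cycles; third, $G$ cannot have edges beyond those of a spanning cycle. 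This last sub-case is the main obstacle: given a cycle $C\subseteq G$, if $V(C)\subsetneq V(G)$ then any $w\in V(G)\setminus V(C)$ preserves $C$ upon deletion; if instead $V(C)=V(G)$ but $G\neq C$, then $G$ contains a chord $xy$ of $C$, and for any $u\in V(C)\setminus\{x,y\}$ (which exists since $|V(C)|\geq 3$), the arc of $C$ from $x$ to $y$ avoiding $u$ together with the chord $xy$ yields a cycle in $G-u$, contradicting the forest property. After these exclusions, $G$ is connected and $2$-regular, hence a cycle.
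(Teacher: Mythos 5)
The paper states this observation without proof, so there is nothing to compare against; your argument is correct and supplies exactly the elementary details one would expect, with part (iv) (every vertex-deleted subgraph is a forest, hence any cycle must span $V(G)$ and admit no chord) being the only part with real content. One minor remark: your first exclusion as phrased (``a vertex in a component without a cycle'') does not cover a disconnected $G$ in which every component contains a cycle, but this case is already subsumed by your third exclusion, since any cycle then fails to span $V(G)$ and a vertex outside it can be deleted.
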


\begin{observation} \label{obs:ColDropsByOneExactly}
  For any graph $G$ and any element $x \in E(G) \cup V(G)$, if $\col(G - x) <
  \col(G)$ then $\col(G-x) = \col(G) - 1$.
\end{observation}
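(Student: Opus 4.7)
My plan is to prove the contrapositive-style statement $\col(G) - \col(G-x) \leq 1$ for any edge or vertex $x$; combined with the hypothesis $\col(G-x) < \col(G)$ and the fact that $\col$ is integer-valued, this forces equality $\col(G-x) = \col(G) - 1$. The cleanest tool for this is the alternative characterisation stated in equation~\eqref{eq:u8934jkasf3498}, namely $\col(G) = \max\{\delta(H) \mid H \text{ induced subgraph of } G\} + 1$, because minimum degree is much easier to control under deletion than the original ordering-based definition.

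First I would fix an induced subgraph $H = G[S]$ of $G$ achieving the maximum in~\eqref{eq:u8934jkasf3498}, so that $\delta(H) = \col(G) - 1$. For the edge case $x = e$, consider the induced subgraph $H' = (G-e)[S]$ of $G - e$. Since at most one edge (namely $e$ itself, if its endpoints lie in $S$) is lost, each vertex of $S$ loses at most one neighbour, giving $\delta(H') \geq \delta(H) - 1$. Hence $\col(G-e) \geq \delta(H') + 1 \geq \col(G) - 1$, as required.

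For the vertex case $x = v$, if $v \notin S$ then $H$ itself is an induced subgraph of $G-v$ and $\col(G-v) \geq \col(G)$, which already contradicts the hypothesis; so we may assume $v \in S$ and take $H'' = (G-v)[S \setminus \{v\}]$. Every remaining vertex $u \in S \setminus \{v\}$ loses at most the one neighbour $v$, so $\delta(H'') \geq \delta(H) - 1 = \col(G) - 2$, and therefore $\col(G-v) \geq \delta(H'') + 1 \geq \col(G) - 1$. Combining both cases with the hypothesis completes the argument.

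There is no real obstacle here; the only subtle point is ensuring the subgraph I pass to after deletion is genuinely \emph{induced} in $G - x$ (so that I am entitled to apply~\eqref{eq:u8934jkasf3498}), and that is immediate because for any $S \subseteq V(G) \setminus \{x\}$ when $x$ is a vertex, and any $S \subseteq V(G)$ when $x$ is an edge, the graph $(G-x)[S]$ coincides with $G[S]$ minus possibly one edge. So the proof is essentially a two-line bookkeeping argument once the $\delta$-based characterisation is invoked.
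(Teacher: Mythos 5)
Your argument is correct: the paper states Observation~\ref{obs:ColDropsByOneExactly} without proof, and your derivation from the characterisation $\col(G)=\max\{\delta(H)\mid H \text{ induced subgraph of } G\}+1$ --- noting that deleting an edge or a vertex lowers the minimum degree of a witnessing induced subgraph by at most one --- is exactly the intended, standard justification. The only point worth a passing remark is the degenerate situation where the witnessing set $S$ equals $\{v\}$ (equivalently $\col(G)=1$) or $G-x$ is empty; there the conclusion $\col(G-x)=\col(G)-1$ holds directly from the definitions, so nothing is lost.
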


\begin{observation} \label{obs:395afhq9348alsdif45ur}
A graph $G$ is $\col$-vertex-critical if and only if $\col(G - v) < \col(G)$ for every vertex $v$ in $G$.
\end{observation}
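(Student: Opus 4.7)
The plan is to verify the two implications directly from the definition of $\col$-vertex-criticality, using only the monotonicity of $\col$ on subgraphs that was recorded earlier in the section.

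For the forward direction, I would assume that $G$ is $\col$-vertex-critical and let $v$ be any vertex of $G$. Then $G-v$ is a proper induced subgraph of $G$, so by definition $\col(G-v)<\col(G)$, giving the desired inequality immediately.

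For the backward direction, I would assume that $\col(G-v)<\col(G)$ for every $v\in V(G)$ and show that $\col(F)<\col(G)$ for every proper induced subgraph $F$ of $G$. Such an $F$ has the form $G[V(G)\setminus S]$ for some non-empty $S\subseteq V(G)$. Pick any $v\in S$; then $F$ is an induced subgraph (in particular a subgraph) of $G-v$, so monotonicity of $\col$ on subgraphs yields $\col(F)\le\col(G-v)<\col(G)$, as required.

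There is no real obstacle here: the statement is essentially a reformulation of the definition, and the only nontrivial ingredient is the already-cited fact that $\col$ is monotone on subgraphs, which handles the reduction from arbitrary proper induced subgraphs to subgraphs of the form $G-v$.
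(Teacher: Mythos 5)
Your proof is correct; the paper states this observation without proof, and your argument (the forward direction from the definition, the backward direction by reducing an arbitrary proper induced subgraph $F = G[V(G)\setminus S]$ to a subgraph of $G-v$ for some $v\in S$ and invoking monotonicity of $\col$) is exactly the intended routine verification. Nothing is missing.
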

\begin{observation} \label{obs:existenceOfColCriticalSubgraph}
  Given any graph $G$, there is a $\col$-critical subgraph $F$ of $G$
  with $\col (G) = \col(F) = \delta(F)+1$. In particular, if $G$ is
  $\col$-critical then $\col(G) = \delta(G) + 1$.
\end{observation}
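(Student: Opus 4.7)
The plan is to split the observation into two parts: first, find a $\col$-critical subgraph $F$ of $G$ with $\col(F)=\col(G)$; second, show that any $\col$-critical graph $F$ satisfies $\col(F)=\delta(F)+1$. The ``In particular'' statement then follows immediately, because if $G$ is itself $\col$-critical, the subgraph $F$ provided by the first part must equal $G$ (any proper subgraph with the same colouring number would contradict $\col$-criticality of $G$).

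For the existence of $F$, I would simply pick $F$ to be a subgraph of $G$ with $\col(F)=\col(G)$ and with $|V(F)|+|E(F)|$ as small as possible; $G$ itself is one candidate, so such $F$ exists. For any proper subgraph $F'$ of $F$, monotonicity gives $\col(F')\le\col(F)=\col(G)$, and equality would contradict the choice of $F$. Hence $\col(F')<\col(F)$, so $F$ is $\col$-critical.

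For the second part, the inequality $\col(F)\ge\delta(F)+1$ is immediate from the characterisation in~\eqref{eq:u8934jkasf3498}, since $F$ itself is an induced subgraph of $F$. The converse inequality is the real content. Let $v$ be a vertex of $F$ with $\deg(v,F)=\delta(F)$. Because $F$ is $\col$-critical, $\col(F-v)\le\col(F)-1$ by Observation~\ref{obs:ColDropsByOneExactly}. Take an ordering $v_1,\dots,v_{n-1}$ of $V(F)\setminus\{v\}$ witnessing $\col(F-v)$, and extend it by setting $v_n:=v$. For $i<n$ the induced subgraph $F[v_1,\dots,v_i]$ coincides with $(F-v)[v_1,\dots,v_i]$, so $\deg(v_i,F[v_1,\dots,v_i])\le\col(F-v)-1\le\col(F)-2$; and at step $n$, $\deg(v_n,F)=\delta(F)$. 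By definition of $\col$ this yields $\col(F)\le\max\{\col(F)-2,\ \delta(F)\}+1$. Since the first term would give the contradiction $\col(F)\le\col(F)-1$, we must have $\delta(F)\ge\col(F)-1$, completing the equality.

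I do not anticipate a serious obstacle: the minimality argument is routine, and the key ordering trick of placing a minimum-degree vertex last is the standard way to squeeze the colouring number out of a degree bound. The only place where one needs to be slightly careful is to verify that deleting $v$ is legitimate (i.e.\ gives a \emph{proper} subgraph to which the critical hypothesis applies), which holds whenever $F$ is non-empty; the empty case is trivial.
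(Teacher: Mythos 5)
Your proof is correct, but it takes a different route from the paper's. The paper works entirely through the identity $\col(G) = \max \{ \delta(H) \mid H \subseteq G \} + 1$ from~\eqref{eq:u8934jkasf3498}: it chooses, among all subgraphs $H$ with $\delta(H)+1 = \col(G)$, one that is minimal in the sense that every proper subgraph has strictly smaller minimum degree, and this single minimality choice delivers both the criticality of $F$ and the equality $\col(F)=\delta(F)+1$ at once. You instead decompose the claim: a routine minimality argument (minimising $|V(F)|+|E(F)|$ subject to $\col(F)=\col(G)$) produces a $\col$-critical subgraph, and then you prove separately that every $\col$-critical graph satisfies $\col(F)=\delta(F)+1$ by the greedy-ordering trick of appending a minimum-degree vertex to an optimal ordering of $F-v$ and using $\col(F-v)\le\col(F)-1$. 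Your second step is a direct argument from the definition~\eqref{eq:987qwenlasdf923} rather than from~\eqref{eq:u8934jkasf3498} (which you only use for the easy inequality $\col(F)\ge\delta(F)+1$), so it is slightly longer but more self-contained; the paper's version is more economical because it lets the max-min-degree identity do all the work. Both arguments are sound, and your handling of the corner cases (empty graph, $F=K_1$) is adequate.
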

\begin{proof}
  Recall that $\col(G) = \max \{ \delta(H) \mid H \subseteq G \} +
  1$. Among the subgraphs $H$ of $G$ with $\col (G) = \delta(H) + 1$,
  let $F$ denote a minimal one, that is, $\delta (F') < \delta (F)$
  for every proper subgraph $F'$ of $F$. (This minimum exists since
  $G$ is finite.) Then $F$ is $\col$-critical with
  $\col(F) = \delta(F) + 1 = \col(G)$.
\end{proof}

\begin{observation}\label{obs:existenceOfColVertexCriticalSubgraph}
  Given any graph $G$, there is a $\col$-vertex-critical induced
  subgraph $F$ of $G$ with $\col (G) = \col(F) = \delta(F)+1$.  In
  particular, if $G$ is $\col$-vertex-critical then $\col(G) =
  \delta(G) + 1$.
\end{observation}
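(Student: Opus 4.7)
The plan is to mimic the argument of Observation~\ref{obs:existenceOfColCriticalSubgraph}, replacing \emph{minimal subgraph} by \emph{vertex-minimal induced subgraph}, and to exploit the simple fact that passing from an arbitrary subgraph $H$ of $G$ to the induced subgraph $G[V(H)]$ can only increase the minimum degree. This is the only additional ingredient needed beyond the argument already written for Observation~\ref{obs:existenceOfColCriticalSubgraph}.

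First I would record the key preliminary: for any subgraph $H$ of $G$, the induced subgraph $G[V(H)]$ has the same vertex set as $H$ but possibly more edges, so $\delta(G[V(H)]) \geq \delta(H)$. Combined with the identity $\col(G) = \max\{\delta(H) \mid H \subseteq G\}+1$ from~\eqref{eq:u8934jkasf3498}, this shows that the maximum is already attained by some induced subgraph of~$G$. Hence the family
\[
\mc{F} = \{ H \mid H \text{ is an induced subgraph of } G \text{ with } \delta(H)+1 = \col(G) \}
\]
is non-empty. Since $G$ is finite, $\mc{F}$ contains an element $F$ with $|V(F)|$ minimum.

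Next I would verify that this $F$ has the desired properties. By monotonicity $\col(F) \leq \col(G)$, and by \eqref{eq:u8934jkasf3498} $\col(F) \geq \delta(F)+1 = \col(G)$, so $\col(F) = \col(G) = \delta(F)+1$. To show that $F$ is $\col$-vertex-critical, by Observation~\ref{obs:395afhq9348alsdif45ur} it suffices to verify $\col(F-v) < \col(F)$ for every $v \in V(F)$. Suppose for contradiction that $\col(F-v) = \col(F) = \col(G)$ for some~$v$. Applying the preliminary observation to $F-v$ in place of $G$, we obtain an induced subgraph $F'$ of $F-v$ with $\delta(F')+1 = \col(F-v) = \col(G)$. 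Since $F'$ is an induced subgraph of $G$ belonging to $\mc{F}$ with strictly fewer vertices than $F$, this contradicts the minimality of $F$.

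Finally, for the \emph{in particular} clause, suppose $G$ itself is $\col$-vertex-critical. If $F \neq G$, pick $v \in V(G) \setminus V(F)$; then $F$ is an induced subgraph of $G-v$, and so $\col(G-v) \geq \col(F) = \col(G)$, contradicting Observation~\ref{obs:395afhq9348alsdif45ur} applied to~$G$. Thus $F = G$ and $\col(G) = \delta(G)+1$ as required. The main (and only mild) obstacle is the preliminary observation that the maximum in~\eqref{eq:u8934jkasf3498} is attained by induced subgraphs; once that is in place the proof runs parallel to Observation~\ref{obs:existenceOfColCriticalSubgraph}.
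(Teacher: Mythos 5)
Your proof is correct and follows essentially the same route as the paper's: a minimality argument over induced subgraphs combined with the characterisation $\col(G)=\max\{\delta(H)\}+1$, the only cosmetic difference being that you minimise $|V(F)|$ over induced subgraphs with $\delta(F)+1=\col(G)$ and then deduce vertex-criticality, whereas the paper takes a minimal induced subgraph with $\col(F)=\col(G)$ and then deduces $\delta(F)+1=\col(F)$. Both directions of deduction go through, so nothing further is needed.
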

\begin{proof}
  Let $F$ denote a minimal induced subgraph of $G$ with $\col(F) =
  \col(G)$. This implies $\col(F') < \col(F)$ for any induced proper
  subgraph $F'$ of $F$, in particular, $F$ is a $\col$-vertex-critical
  graph. Suppose $\col(F) > \delta(F)+1$. Then there is some proper
  induced subgraph $F'$ of $F$ with $\delta(F') + 1 = \col(F)$, and so
  $\col(F') \geq \col(F)$, a contradiction. Hence $\col(F) = \delta(F)
  + 1$. If $G$ is $\col$-vertex-critical, then $F=G$, and the desired
  result follows.
\end{proof}

The two following results may be of interest in their own right.

\begin{proposition}[Pedersen~\cite{ASP_PhD_Thesis}] \label{prop:completeJoinsAndColouringNumber} For
  any two non-empty disjoint graphs $G_1$ and $G_2$, the colouring number of the
  complete join $G_1 + G_2$ is at most
\begin{equation} \label{eq:435897asdfjk}
\min \{ \col(G_1) + n(G_2), \col(G_2) + n(G_1) \} 
\end{equation}
and at least
\begin{equation} \label{eq:2345897sdfjk3498}
\min \{ \col(G_1) + n(J_2), \col(G_2) + n(J_1) \}
\end{equation}
where, for each $i \in \{ 1, 2 \}$, $J_i$ is any subgraph of $G_i$ with
minimum degree equal to $\col(G_i) - 1$. 

 If, in addition, $\col(G_i) = \delta(G_i) + 1$ for each $i \in \{1, 2 \}$
 (in particular, if both $G_1$ and $G_2$ are $\col$-vertex-critical),
 then the colouring number of the complete join $G_1 + G_2$ is equal
 to the minimum in~\eqref{eq:435897asdfjk}.
\end{proposition}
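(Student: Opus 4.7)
My plan is to handle the upper bound, the lower bound, and the equality statement separately, and in each case to reduce the problem to facts already recorded in the paper, principally the vertex-ordering definition~\eqref{eq:987qwenlasdf923} and the min-degree characterisation~\eqref{eq:u8934jkasf3498}.

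For the upper bound I would use~\eqref{eq:987qwenlasdf923} directly. Pick an ordering $\pi_2$ of $V(G_2)$ achieving $\col(G_2)$ and an ordering $\pi_1$ of $V(G_1)$ achieving $\col(G_1)$, then concatenate them, listing all vertices of $G_2$ first and then all vertices of $G_1$. A vertex $v\in V(G_2)$ has no neighbour in $G_1$ appearing before it in the combined order, so its back-degree equals its back-degree in the $G_2$-ordering, at most $\col(G_2)-1$. A vertex $v\in V(G_1)$ sees all $n(G_2)$ vertices of $G_2$ earlier (because of the join edges) plus at most $\col(G_1)-1$ earlier neighbours in $G_1$, giving back-degree at most $n(G_2)+\col(G_1)-1$. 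The maximum is $n(G_2)+\col(G_1)-1$, so $\col(G_1+G_2)\leq \col(G_1)+n(G_2)$, and reversing the roles gives the other half of~\eqref{eq:435897asdfjk}.

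For the lower bound I would exhibit a single witness subgraph in the sense of~\eqref{eq:u8934jkasf3498}. Given $J_1\subseteq G_1$ and $J_2\subseteq G_2$ with $\delta(J_i)=\col(G_i)-1$, the join $J_1+J_2$ is a subgraph of $G_1+G_2$ because all cross-edges are present in the ambient join. Every $v\in V(J_1)$ has degree $\deg_{J_1}(v)+n(J_2)\geq \delta(J_1)+n(J_2)=\col(G_1)-1+n(J_2)$ inside $J_1+J_2$, and symmetrically for $v\in V(J_2)$. Hence
\[
\delta(J_1+J_2)\geq \min\{\col(G_1)-1+n(J_2),\ \col(G_2)-1+n(J_1)\},
\]
and~\eqref{eq:u8934jkasf3498} then gives $\col(G_1+G_2)\geq \delta(J_1+J_2)+1$, which is exactly~\eqref{eq:2345897sdfjk3498}.

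For the last sentence, if $\col(G_i)=\delta(G_i)+1$ for each $i$, then $J_i:=G_i$ is a legal choice (note that $\col$-vertex-critical graphs satisfy this by Observation~\ref{obs:existenceOfColVertexCriticalSubgraph}, justifying the parenthetical remark). Substituting $n(J_i)=n(G_i)$ into~\eqref{eq:2345897sdfjk3498} makes the lower bound coincide with the upper bound~\eqref{eq:435897asdfjk}, forcing equality. I do not expect a real obstacle here; the only thing to be careful about is checking that $J_1+J_2$ really is a subgraph of $G_1+G_2$ (rather than only a topological minor or the like) and that the minimum-degree bounds in the join incorporate the cross-edges correctly, both of which are immediate from the definition of the complete join.
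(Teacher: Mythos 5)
Your proof is correct. The paper itself gives no proof of this proposition (it is imported from the cited PhD thesis), but your argument — a concatenated ordering for the upper bound, the subgraph $J_1+J_2$ as a witness of large minimum degree via~\eqref{eq:u8934jkasf3498} for the lower bound, and taking $J_i=G_i$ for the equality case — is the natural and standard one, and all steps check out (including the minor point that $J_1+J_2$ need not be induced, which is harmless since passing to the induced subgraph on $V(J_1)\cup V(J_2)$ only increases the minimum degree).
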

A graph $G$ is said to be \emph{decomposable} if there is a partition
of $V(G)$ into two (non-empty) sets $V_1$ and $V_2$ such that, in $G$,
every vertex of $V_1$ is adjacent to every vertex of $V_2$. Given any
graph $G$, we let $V_\delta(G)$ denote the set of vertices of $G$ of
minimum degree in $G$. Clearly, $V_\delta(G)$ is non-empty for any
non-empty graph.
\begin{proposition}[Pedersen~\cite{ASP_PhD_Thesis}]~\label{prop:completeJoinsAndColCritical} Let $G$ denote a decomposable graph. Then $G$ is
  $\col$-critical if and only if the vertex set of $G$ can be
  partitioned into two sets $V_1$ and $V_2$ such that $G = G_1 + G_2$,
  where $G_i := G[V_i]$ for $i \in \{1,2\}$, $G_1$ is regular, and
\begin{itemize}
\item[\emph{(i)}] $V(G_2) \setminus V_\delta(G_2)$ is an independent
  set of $G_2$, and 
$$\delta(G_1) + n(G_2) = \delta(G_2) + n(G_1)$$
or
\item[\emph{(ii)}] $G_2$ is an edgeless graph, and 
$$n(G_1) - \delta(G_1) - n(Q) < n(G_2) < n(G_1) - \delta(G_1)$$
where $Q$ denotes a smallest component of $G_1$ (in terms of the
number of vertices).
\end{itemize}
Moreover, $\col(G) = \delta(G_1) + n(G_2) + 1$ in both \emph{(i)} and \emph{(ii)}.
\end{proposition}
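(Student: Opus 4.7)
Throughout, write $n_i := n(G_i)$ and $\delta_i := \delta(G_i)$, and set $d := \delta(G) = \min\{\delta_1+n_2,\,\delta_2+n_1\}$. After possibly swapping the two parts, assume $d = \delta_1 + n_2$. The plan rests on the reformulation, from Observation~\ref{obs:existenceOfColCriticalSubgraph} and the identity $\col(H) = \max\{\delta(K) : K \subseteq H\}+1$, that $G$ is $\col$-critical if and only if $\delta(F) < d$ for every proper subgraph $F \subsetneq G$.

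For the necessity direction, I first examine edge deletions. For a join edge $uv$ with $u \in V_1$ and $v \in V_2$, direct computation of $\deg_{G-uv}(u) = \deg_{G_1}(u)+n_2-1$ and $\deg_{G-uv}(v) = \deg_{G_2}(v)+n_1-1$ reduces $\delta(G-uv) < d$ to the disjunction ``$u \in V_\delta(G_1)$, or $v \in V_\delta(G_2)$ together with $\delta_1+n_2 = \delta_2+n_1$''; varying $u,v$ forces $G_1$ to be regular (after swapping the two parts if only $G_2$ is regular and the equality holds). Edges inside $G_1$ then automatically drop $\delta$, while for each edge $uv \in E(G_2)$ the same computation forces $\{u,v\} \cap V_\delta(G_2) \neq \emptyset$ \emph{and} $\delta_1+n_2 = \delta_2+n_1$; so if $G_2$ has an edge, case~(i) holds. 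If $G_2$ is edgeless and $\delta_1 + n_2 < n_1$ strictly, I turn to induced subgraphs: for any proper union $S_1 \subsetneq V_1$ of connected components of $G_1$, the subgraph $G[S_1 \cup V_2] = G[S_1] + G[V_2]$ has minimum degree $\min\{\delta_1+n_2,\,|S_1|\}$, since $G[S_1]$ is $\delta_1$-regular and $G[V_2]$ is edgeless; criticality forces $|S_1| < \delta_1 + n_2$, and applying this to $S_1 = V_1 \setminus V(Q)$, with $Q$ a smallest component of $G_1$, yields $n_2 > n_1-\delta_1-n(Q)$, completing case~(ii).

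For sufficiency, under case~(i) I first argue $\col(G_2) = \delta_2+1$: any $H \subseteq G_2$ with $\delta(H) > \delta_2$ would satisfy $V(H) \subseteq V(G_2) \setminus V_\delta(G_2)$, hence be edgeless, a contradiction. Combined with $\col(G_1) = \delta_1+1$ (by regularity), Proposition~\ref{prop:completeJoinsAndColouringNumber} yields $\col(G) = d+1$. To show $\delta(F) < d$ for each proper $F \subsetneq G$, I write $U_i = V(F) \cap V_i$ and use $F \subseteq G[U_1]+G[U_2]$: if some $U_i \subsetneq V_i$, then $\delta(G[U_1]+G[U_2]) = \min\{\delta(G[U_1])+|U_2|,\,\delta(G[U_2])+|U_1|\}$ is strictly less than $d$ because $\delta(G[U_j]) \leq \delta_j$ (from $\col(G_j) = \delta_j+1$) and $|U_i| < n_i$; if $U_1 = V_1$ and $U_2 = V_2$, then some edge of $G$ is missing from $F$, and since $V(G) \setminus V_\delta(G) = V_2 \setminus V_\delta(G_2)$ is independent, this edge meets $V_\delta(G) = V_1 \cup V_\delta(G_2)$, forcing the $F$-degree of some vertex to drop below $d$.

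Case~(ii) is handled in the same way, with one additional point: when $U_1 \subsetneq V_1$ satisfies $|U_1| \geq d$, the hypothesis $n_2 > n_1 - \delta_1 - n(Q)$, equivalently $n_1 - n(Q) < d$, precludes $U_1$ from being a union of components of $G_1$, so $\delta(G[U_1]) < \delta_1$ and $\delta(G[U_1])+|U_2| \leq (\delta_1-1)+n_2 < d$. The main obstacle throughout is the induced-subgraph analysis in case~(ii): because $\col$-criticality controls \emph{all} proper subgraphs, not merely those obtained by single edge or single vertex deletion, the global component structure of $G_1$ necessarily enters the statement via the quantity $n(Q)$.
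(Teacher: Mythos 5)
The paper does not actually prove this proposition: it is imported from the second author's thesis \cite{ASP_PhD_Thesis} and stated without proof, so there is no in-paper argument to compare yours against. Judged on its own, your proof is correct and complete. The key reduction --- that for $d=\delta(G)$ the graph $G$ is $\col$-critical if and only if $\delta(F)<d$ for every proper subgraph $F$, in which case $\col(G)=d+1$ --- is a valid consequence of Observation~\ref{obs:existenceOfColCriticalSubgraph} and the identity $\col(H)=\max\{\delta(K):K\subseteq H\}+1$. Your degree computations for single edge deletions correctly force regularity of one side, the independence condition, and the equality $\delta_1+n_2=\delta_2+n_1$ whenever $G_2$ has an edge; the induced join $G[S_1]+G_2$ over a union of components $S_1$ correctly yields the lower bound involving $n(Q)$; and the sufficiency analysis via $F\subseteq G[U_1]+G[U_2]$, split according to whether a part is proper and whether $|U_1|\geq d$, together with the observation that $V(G)\setminus V_\delta(G)$ is independent for spanning subgraphs, covers all proper subgraphs. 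A few degenerate cases are passed over silently but cause no trouble: the case where $G_2$ is edgeless and $\delta_1+n_2=n_1$ (which lands in case (i) with the independence condition vacuous), the case $U_i=\emptyset$ in the join-degree formula, and the case where $G_1$ is connected (where the lower bound $n_2>n_1-\delta_1-n(Q)=-\delta_1$ is vacuous). It would be worth making these explicit, but none of them is a gap in the argument.
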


\section{Double-col-critical graphs}
The analogue of the Double-Critical Graph Conjecture with $\chi$
replaced by $\col$ does not hold. For instance, the non-complete graph
$C_6^2$ is $4$-regular, $5$-$\col$-critical, and
double-$\col$-critical. Since $C_6^2$ is planar, it also follows that
it is not even true that every double-$\col$-critical graph with
colouring number $5$ contains a $K_5$
minor. (In~\cite{KawarabayashiPedersenToftEJC2010}, it was proved that
every double-critical graph $G$ with $\chi(G) \leq 7$ at least
contains a $K_{\chi(G)}$ minor.) It is easy to see that the square of
any cycle of length at least $5$ is a double-$\col$-critical graph
with colouring number $5$.
\begin{observation} \label{obs:345770asdfs456dflkj345}
Any double-$\col$-critical graph is $\col$-vertex-critical.
\end{observation}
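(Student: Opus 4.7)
The plan is to show that for a double-$\col$-critical graph $G$ with $\col(G)=k$ and any vertex $v\in V(G)$, we have $\col(G-v)<k$. This I would do by contradiction, extracting from $G-v$ a witness to $\col(G-v)\ge k$ via the min-degree characterisation~\eqref{eq:u8934jkasf3498}, and then showing that this witness (possibly after deleting one vertex) already lives inside $G-V(e)$ for some suitable edge $e$, contradicting the double-$\col$-criticality.

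More concretely, I would first dispose of the trivial case $G\simeq K_1$ using Observation~\ref{obs:elementaryColCritical}. In the remaining case $G$ is connected on at least two vertices, so every $v\in V(G)$ has some neighbour $u$, and $e:=uv$ is an edge of $G$ with $\col(G-V(e))\le k-2$ by hypothesis. Suppose, for contradiction, that $\col(G-v)\ge k$; since $G-v$ is a subgraph of $G$, monotonicity (and Observation~\ref{obs:ColDropsByOneExactly} applied to $G$) gives $\col(G-v)=k$. By~\eqref{eq:u8934jkasf3498} there is an induced subgraph $H$ of $G-v$ with $\delta(H)=k-1$.

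Now I would split on whether $u\in V(H)$. If $u\notin V(H)$, then $H$ is already an induced subgraph of $G-\{u,v\}=G-V(e)$, so $\col(G-V(e))\ge\delta(H)+1=k$, contradicting $\col(G-V(e))\le k-2$. If $u\in V(H)$, consider the induced subgraph $H-u$ of $G-V(e)$. Removing a single vertex decreases the minimum degree by at most one, so $\delta(H-u)\ge k-2$, provided $H-u$ is non-empty. Non-emptiness is the only thing to check: if $V(H)=\{u\}$, then $\delta(H)=0=k-1$ forces $k=1$, but the edge $uv$ in $G$ yields $\col(G)\ge 2$, a contradiction. Hence $H-u$ is non-empty, and $\col(G-V(e))\ge\delta(H-u)+1\ge k-1$, again contradicting $\col(G-V(e))\le k-2$.

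There is no real obstacle here; the only subtlety is making sure one handles the degenerate possibility $V(H)=\{u\}$, which is resolved immediately by observing that the existence of the edge $uv$ in $G$ lifts the colouring number of $G$ above $1$. The argument uses only the minimum-degree characterisation~\eqref{eq:u8934jkasf3498} of $\col$, monotonicity of $\col$ on subgraphs, and the hypothesis $\col(G-V(e))\le \col(G)-2$ for a single well-chosen edge $e$ incident with $v$.
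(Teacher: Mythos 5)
Your proof is correct and follows essentially the same route as the paper's: fix a neighbour $u$ of $v$, use double-$\col$-criticality of the edge $uv$, and derive a contradiction from the fact that deleting the single vertex $u$ from $G-v$ cannot lower the colouring number by two. The only difference is that you re-derive this last fact explicitly from the minimum-degree characterisation~\eqref{eq:u8934jkasf3498} (with careful handling of the degenerate case $V(H)=\{u\}$), whereas the paper simply invokes Observation~\ref{obs:ColDropsByOneExactly}.
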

\begin{proof}
  Let $G$ denote a double-$\col$-critical graph. If there are no
  vertices in $G$, then we are done. Let $v$ denote an arbitrary but
  fixed vertex of $G$. If there is no vertex in $G$ adjacent to $v$,
  then we are done, since then, by the connectedness of $G$, $G$ is
  just the singleton $K_1$. Let $u$ denote a neighbour of $v$. By
  Observation~\ref{obs:395afhq9348alsdif45ur}, we need to show $\col(G
  - v) < \col(G)$. The fact that $G$ is double-$\col$-critical implies
  $\col(G - u - v) \leq \col(G) - 2$. Suppose $\col(G-v) \geq
  \col(G)$. Then
$$\col((G - v) - u) \leq \col(G) - 2 = \col(G-v) - 2$$
which contradictions Observation~\ref{obs:ColDropsByOneExactly}. This
shows $\col(G - v)$ is strictly less than $\col(G)$, as desired.
\end{proof}

\begin{observation} \label{obs:j3495yfh394tsdf}
For each integer $k \in \{0,1,2,3,4 \}$, the only double-$\col$-critical graph
with colouring number $k$ is the $k$-clique.
\end{observation}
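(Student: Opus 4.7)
The plan is to handle the cases $k\in\{0,1,2,3\}$ essentially for free by invoking the structural observations already established, and then to attack $k=4$ by a short counting argument built on the existence of an edge both of whose endpoints have minimum degree.

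For $k=0$ and $k=1$, Observation~\ref{obs:3457089asdflkjh345}(i)--(ii) leaves only the empty graph and connected edgeless graphs, so the only candidates are $K_0$ and $K_1$. For $k\in\{2,3\}$, Observation~\ref{obs:345770asdfs456dflkj345} gives that $G$ is $\col$-vertex-critical, so Observation~\ref{obs:elementaryColCritical} forces $G=K_2$ when $k=2$ and $G=C_n$ for some $n\geq 3$ when $k=3$. In the latter case, being double-$\col$-critical requires $\col(C_n - V(e)) \leq 1$, so $C_n$ with two adjacent vertices removed contains no edge; this forces $n=3$, i.e.\ $G=K_3$.

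The real work is $k=4$. Assuming $G$ is double-$\col$-critical with $\col(G)=4$, Observations~\ref{obs:345770asdfs456dflkj345} and~\ref{obs:existenceOfColVertexCriticalSubgraph} give $\delta(G)=3$. Pick a vertex $v$ of degree $3$ with neighbours $u_1,u_2,u_3$. By $\col$-vertex-criticality, $\col(G-v)\leq 3$, so $G-v$ is $2$-degenerate and thus has a vertex $x$ with $\deg(x,G-v)\leq 2$. Since $\delta(G)=3$, the vertex $x$ must be adjacent to $v$, and moreover $\deg(x,G) = 3$. So $vx$ is an edge both of whose endpoints have degree exactly $3$ in $G$. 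Identifying this ``light'' edge is the crux of the argument.

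Now I apply the double-$\col$-critical property to the edge $vx$: $\col(G-v-x)\leq 2$, so by Observation~\ref{obs:3457089asdflkjh345} the graph $G-v-x$ is a forest on $n(G)-2$ vertices, hence has at most $n(G)-3$ edges. Computing directly,
\[
m(G-v-x) \;=\; m(G) - \deg(v,G) - \deg(x,G) + 1 \;=\; m(G) - 5,
\]
so $m(G) \leq n(G)+2$. Combined with the handshake bound $m(G)\geq \tfrac{3}{2}n(G)$ coming from $\delta(G)\geq 3$, we get $n(G)\leq 4$, and since $\delta(G)=3$ forces $n(G)\geq 4$, we conclude $n(G)=4$ and $G$ is $3$-regular on $4$ vertices, i.e.\ $G=K_4$. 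The main obstacle is the very first step in the $k=4$ argument, namely producing an edge joining two minimum-degree vertices; once this is in hand, the rest is a one-line edge count.
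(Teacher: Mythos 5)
Your proof is correct, and while the easy cases $k\le 3$ are handled exactly as in the paper (via Observations~\ref{obs:345770asdfs456dflkj345}, \ref{obs:existenceOfColVertexCriticalSubgraph} and \ref{obs:elementaryColCritical}), your treatment of $k=4$ is genuinely different. The paper argues structurally: it fixes an arbitrary edge $xy$, uses that $G-x-y$ is a forest, takes two leaves $u,v$ of a tree component, notes that $\delta(G)=3$ forces each leaf to be adjacent to both $x$ and $y$, and then either finds an induced $K_4$ (whence $G\simeq K_4$ by vertex-criticality) or exhibits a triangle $\{x,y,v\}$ surviving in $G-t-u$ for some edge $tu$, contradicting $\col(G-t-u)\le 2$. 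You instead first manufacture a special edge $vx$ joining two vertices of degree exactly $3$ --- obtained correctly from $\col$-vertex-criticality, since the low-degree vertex of $G-v$ must be a neighbour of $v$ --- and then close with a pure edge count: $m(G)-5=m(G-v-x)\le n(G)-3$ against $m(G)\ge \tfrac{3}{2}n(G)$ forces $n(G)=4$ and hence $G=K_4$. Your counting argument is shorter and avoids the leaf case analysis entirely, at the price of needing the preliminary step that a minimum-degree vertex has a minimum-degree neighbour; the paper's argument needs no such preparation and is closer in spirit to the common-neighbour arguments it reuses later (e.g.\ in Observation~\ref{obs:23fds45as79} and the colouring-number-$5$ analysis). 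Both are complete and correct.
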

\begin{proof}
  Let $G$ denote a double-$\col$-critical graph, and define $k :=
  \col(G)$. Then, by Observation~\ref{obs:345770asdfs456dflkj345}, $G$
  is also $\col$-vertex-critical, and so, by
  Observation~\ref{obs:existenceOfColVertexCriticalSubgraph},
  $\delta(G) = k - 1$. If $k \leq 3$, then the desired result follows
  immediately from
  Observation~\ref{obs:elementaryColCritical}. Suppose $k=4$. Then,
  for any edge $e \in E(G)$, $\col(G - V(e)) \leq 2$ and so, by
  Observation~\ref{obs:3457089asdflkjh345}, $G - V(e)$ is a
  forest. Fix an edge $xy \in E(G)$. If $G - x - y$ contains no edges,
  then $G$ is $2$-degenerate and so $\col(G) \leq 3$, a
  contradiction. Let $T$ denote a component of $G - x - y$ with at
  least one edge, and let $u$ and $v$ denote two leafs of $T$. Since,
  as noted above, $\delta(G) = 3$, it follows that both $u$ and $v$
  are adjacent to both $x$ and $y$. If $u$ and $v$ are adjacent in
  $T$, then $G[ \{u,v,x,y \} ] \simeq K_4$, and so, since $G$ is also
  $\col$-vertex-critical, $G \simeq K_4$. Hence we may assume that $u$
  has a neighbour in $T-v$. Now $G[ \{ x,y,v \}]$ is a $3$-clique in
  $G - t - u$ and so $\col(G - t - u) \geq \col (G[ \{ x,y,v \}] ) =
  3$, a contradiction. This completes the proof.
\end{proof}
\begin{figure}[htbp]
  \begin{center}
    \mbox{ \subfigure[The graph $Q_1$.]{\scalebox{0.6}{\input{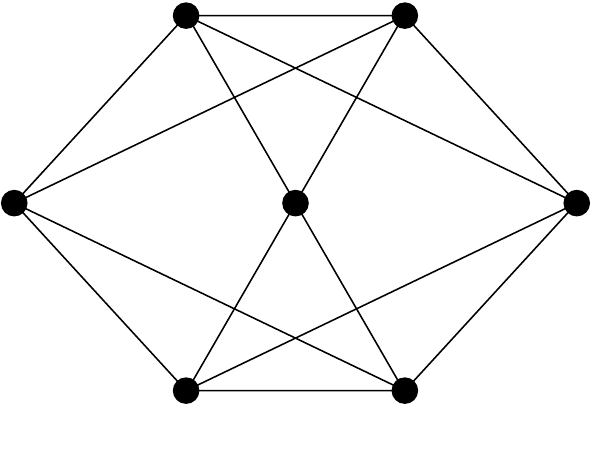_t}}} \hspace{1cm}
      \subfigure[The graph $Q_2$.]{\scalebox{0.6}{\input{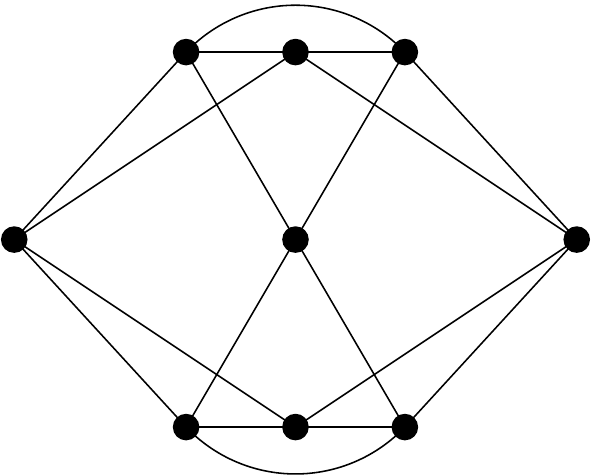_t}}} 
 \hspace{1cm}
      \subfigure[The graph $Q_3$.]{\scalebox{0.6}{\input{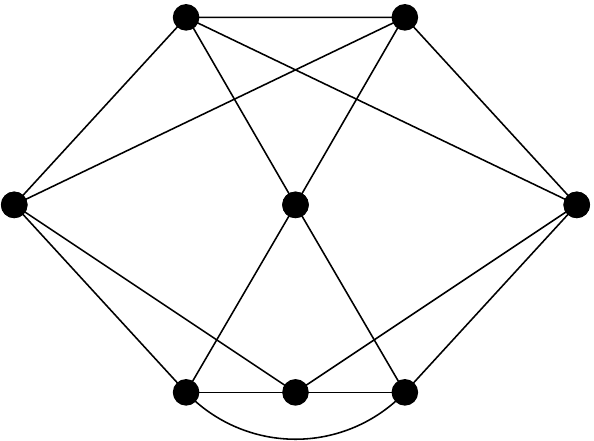_t}}} }
 \end{center}
 \caption{The graphs $Q_1$, $Q_2$, and $Q_3$, depicted above, are the
   only double-$\col$-critical graphs with colouring number $5$ which
   are not squares of cycles.}
 \label{fig:nonSquareCycleDoubleColCritical}
\end{figure}

It is easy to verify that the graphs $Q_1$, $Q_2$, and $Q_3$ in
Figure~\ref{fig:nonSquareCycleDoubleColCritical} are
double-$\col$-critical and have colouring number $5$. None of the
graphs $Q_1$, $Q_2$, and $Q_3$ are squares of a cycle. We shall see
that $Q_1$, $Q_2$, $Q_3$, and the squares of the cycles of length at
least $5$ are all the double-$\col$-critical graphs with colouring
number $5$. First a few preliminary observations.
\begin{observation} \label{obs:23fds45as79} If $G$ is a
  double-$\col$-critical graph, then $\delta(G) = \col(G) - 1$ and
  every pair of adjacent vertices of $G$ has a common neighbour of
  degree $\delta(G)$ in $G$.
\end{observation}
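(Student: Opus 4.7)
The first assertion is essentially immediate from results already established: by Observation~\ref{obs:345770asdfs456dflkj345} a double-$\col$-critical graph $G$ is $\col$-vertex-critical, and then Observation~\ref{obs:existenceOfColVertexCriticalSubgraph} gives $\col(G)=\delta(G)+1$, i.e., $\delta(G)=\col(G)-1$. So the content of the statement lies in the second assertion.

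For the second part I would fix an edge $xy\in E(G)$ and exploit the defining inequality $\col(G-x-y)\le\col(G)-2$ together with the equivalent description~\eqref{eq:u8934jkasf3498} of the colouring number as one more than the maximum over induced subgraphs of the minimum degree. Applied to the subgraph $G-x-y$ itself, this description yields
\[
\delta(G-x-y)\le\col(G-x-y)-1\le\col(G)-3=\delta(G)-2,
\]
so some vertex $w\in V(G)\setminus\{x,y\}$ satisfies $\deg(w,G-x-y)\le\delta(G)-2$.

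Now I would combine this bound with the trivial observations that $\deg(w,G)\ge\delta(G)$ and that removing the two vertices $x,y$ decreases the degree of $w$ by at most $2$; that is,
\[
\deg(w,G)-2\;\le\;\deg(w,G-x-y)\;\le\;\delta(G)-2.
\]
Hence $\deg(w,G)\le\delta(G)$, forcing $\deg(w,G)=\delta(G)$, and moreover equality must hold in the first inequality, which means $w$ loses exactly two neighbours when $x$ and $y$ are deleted. Therefore $w$ is adjacent in $G$ to both $x$ and $y$, and it is the desired common neighbour of minimum degree.

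There is no genuine obstacle; the only point that has to be handled carefully is to invoke the "induced subgraph" characterisation of $\col$ in the right direction, so that the hypothesis $\col(G-x-y)\le\col(G)-2$ directly bounds $\delta(G-x-y)$ rather than only the minimum degree of some unspecified subgraph. Once this is done, a two-line pigeonhole on how much $x$ and $y$ can lower the degree of $w$ finishes the proof.
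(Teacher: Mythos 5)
Your proposal is correct and follows essentially the same route as the paper: the first claim via Observations~\ref{obs:345770asdfs456dflkj345} and~\ref{obs:existenceOfColVertexCriticalSubgraph}, and the second by bounding $\delta(G-x-y)\le\col(G-x-y)-1\le\delta(G)-2$ and observing that a vertex can lose at most two neighbours upon deleting $x$ and $y$, forcing it to have degree $\delta(G)$ and to be adjacent to both.
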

\begin{proof}
  Let $G$ denote a double-$\col$-critical graph. Then, by
  Observation~\ref{obs:345770asdfs456dflkj345}, $G$ is also
  $\col$-vertex-critical, and so, by
  Observation~\ref{obs:existenceOfColVertexCriticalSubgraph},
  $\delta(G) = \col(G)-1$. Let $xy$ denote an arbitrary edge of
  $G$. Now, by the definition of the colouring number, $G-x-y$ has
  minimum degree at most $(\col(G) - 2) - 1$ which is equal to
  $\delta(G) - 2$. This means that some vertex of $V(G) \setminus \{
  x,y \}$, say $z$, which has degree at least $\delta(G)$ in $G$ has
  degree at most $\delta(G) - 2$ in $G-x-y$. The only way this can
  happen is if $z$ has degree $\delta(G)$ in $G$ and is adjacent to
  both $x$ and $y$ in $G$. This completes the argument.
\end{proof}
\begin{observation} \label{obs:345987asdlhf435asdf} If $G$ is a
  non-complete double-$\col$-critical graph, then $G$ does not contain
  a clique of order $\col(G) - 1$.
\end{observation}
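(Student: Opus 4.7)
The plan is to argue by contradiction: suppose $G$ is a non-complete double-$\col$-critical graph, set $k := \col(G)$, and assume that $G$ contains a clique $K$ of order $k-1$. First I observe that $V(G) \setminus K$ must be non-empty, for otherwise $G$ would be a subgraph of $K_{k-1}$, forcing $\col(G) \leq k-1$, a contradiction.

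The key intermediate step will be to show that $V(G) \setminus K$ is an independent set. Indeed, if $u, w \in V(G) \setminus K$ were adjacent, then applying the double-$\col$-criticality of $G$ to the edge $uw$ would give $\col(G - u - w) \leq k - 2$; but $K$ is still a $(k-1)$-clique inside $G - u - w$, so $\col(G - u - w) \geq k - 1$, a contradiction. Combining independence with Observation~\ref{obs:23fds45as79}, which gives $\delta(G) = k-1$, every $u \in V(G) \setminus K$ has all of its (at least $k-1$) neighbours in the $(k-1)$-vertex set $K$, so in fact $N(u, G) = K$ and $K \cup \{u\}$ is a $k$-clique of $G$.

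It remains to complete the contradiction. If $|V(G) \setminus K| = 1$, then $V(G) = K \cup \{u\}$ induces $K_k$, which contradicts the non-completeness of $G$. Otherwise pick two distinct vertices $u, u' \in V(G) \setminus K$ and any $v \in K$; since $u$ is adjacent to every vertex of $K$, $uv$ is an edge of $G$, and because $u'$ is also adjacent to every vertex of $K$, the set $(K \setminus \{v\}) \cup \{u'\}$ forms a $(k-1)$-clique in $G - u - v$. Hence $\col(G - u - v) \geq k - 1$, contradicting that $uv$ is double-$\col$-critical. The only point requiring real care is the final choice of the removed edge $uv$ so that a large clique survives in the deleted graph; once the independence of $V(G) \setminus K$ is established, the rest is bookkeeping.
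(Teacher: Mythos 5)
Your argument is correct and follows essentially the same route as the paper's proof: both establish that $V(G) \setminus V(K)$ is an independent set via double-$\col$-criticality of any edge outside $K$, and then use $\delta(G) = \col(G) - 1$ to conclude that every outside vertex is joined to all of $K$. The only difference is the finishing move --- the paper derives a clique of order $\col(G)$ and contradicts $\col$-vertex-criticality of the non-complete graph $G$, whereas you delete an edge $uv$ between an outside vertex and the clique and exhibit a surviving $(\col(G)-1)$-clique, which is an equally valid and slightly more self-contained way to close the contradiction.
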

\begin{proof}
  Let $G$ denote a non-complete double-$\col$-critical graph. By
  Observation~\ref{obs:345770asdfs456dflkj345}, $G$ is
  $\col$-vertex-critical, and so, since $G$ is also non-complete, $G$
  cannot contain a clique of order more than $\col(G) - 1$. Also, by
  Observation~\ref{obs:345770asdfs456dflkj345}, $\delta(G) = \col(G) -
  1$. Suppose that $G$ contains a clique $K$ of order $\col(G) -
  1$. Clearly, $G - V(K)$ is not empty. If $G - V(K)$ contains an
  edge $xy$, then $\col(G) - 1 = \col(K) \leq \col(G - x - y) \leq
  \col(G) - 2$, a contradiction. Hence $G-V(K)$ is edgeless, and so,
  since $\delta(G) = \col(G) - 1 = n(K)$, it follows that each vertex
  of $V(G)\setminus V(K)$ is adjacent to every vertex of $V(K)$, in
  particular, $G$ contains a clique of order $\col(G)$, a
  contradiction.
\end{proof}

\begin{proposition} \label{prop:98ysdfhlkjasdf3}
  Every double-$\col$-critical graph with colouring number at least $3$ is
  $2$-connected.
\end{proposition}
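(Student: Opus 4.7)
The plan is to assume for contradiction that a double-$\col$-critical graph $G$ with $\col(G) \geq 3$ has a cut vertex $v$, decompose $G-v$ into its components $C_1, \ldots, C_k$ (with $k \geq 2$), and locate a particular component whose colouring number is pinned between two incompatible bounds.

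First, by Observation~\ref{obs:345770asdfs456dflkj345}, $G$ is $\col$-vertex-critical, so $\col(G-v) < \col(G)$, and Observation~\ref{obs:ColDropsByOneExactly} then forces $\col(G-v) = \col(G) - 1$. Next, I would use the elementary identity that the colouring number of a disjoint union is the maximum of the colouring numbers of its components: concatenating good orderings of the $C_i$'s witnesses $\col(G-v) \leq \max_i \col(C_i)$, and the reverse inequality is immediate from the monotonicity of $\col$ on subgraphs. Hence there is an index $i_0$ with $\col(C_{i_0}) = \col(G)-1$.

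To derive the contradiction, I would pick any $j \neq i_0$ and choose $u \in V(C_j)$ adjacent to $v$; such a neighbour exists, for otherwise $V(C_j)$ would form a union of components of $G$ itself, contradicting the connectedness of $G$. Applying double-$\col$-criticality to the edge $uv$ gives $\col(G-u-v) \leq \col(G)-2$. But $C_{i_0}$ is a subgraph of $G-u-v$ because $u,v \notin V(C_{i_0})$, so monotonicity yields $\col(C_{i_0}) \leq \col(G)-2$, contradicting $\col(C_{i_0}) = \col(G)-1$.

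There is no substantial obstacle: the whole argument is a short combination of the observations already established with the disjoint-union identity for $\col$. The one step that must be handled carefully is ensuring that $v$ truly has a neighbour in every component of $G-v$, so that the edge $uv$ to the ``other'' component is available; this is guaranteed by $G$ being connected.
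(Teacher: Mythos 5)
Your proof is correct and follows essentially the same strategy as the paper's: assume a cutvertex, isolate a component of $G-v$ whose colouring number is at least $\col(G)-1$, then delete an edge disjoint from it and let double-$\col$-criticality plus monotonicity of $\col$ produce the contradiction. The only divergence is in the details: the paper gets the lower bound $\col(C)\geq\col(G)-1$ for \emph{every} component $C$ from $\delta(C)\geq\delta(G)-1$ together with $\delta(G)=\col(G)-1$, and deletes an edge lying in $G-V(C)$, whereas you use vertex-criticality and the disjoint-union identity to find one such component and delete the edge $uv$ incident with the cutvertex; both routes are sound.
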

\begin{proof}
  Let $G$ denote a double-$\col$-critical graph with $\col(G) \geq
  3$. Since $G$ is $\col$-vertex-critical, it is connected with
  $\col(G) = \delta(G)+1$. Suppose $G$ is not $2$-connected, and let
  $x$ denote a cutvertex of $G$. Each component of $G-x$ has minimum
  degree at least $\delta(G) - 1$. Let $C$ denote a component of
  $G-x$, and let $e$ denote some edge of $G - V(C)$. Then, using the fact that $G$ is double-$\col$-critical, $\col$ is monotone, and $C$ is a subgraph of $G-V(e)$, we obtain
$$\col(G) - 1 = (\delta(G) + 1) - 1 \leq \col(C) \leq \col(G - V(e)) =
  \col(G) - 2$$
a contradiction. This shows that $G$ must be $2$-connected.
\end{proof}
\paragraph{Double-col-critical graphs with colouring number $\mathbf{5}$.}
\begin{observation} \label{obs:345897awlekrf} If $G$ is a
  double-$\col$-critical graph with colouring number $5$ and $ab \in
  E(G)$, then $a$ or $b$ has degree $4$ in $G$.
\end{observation}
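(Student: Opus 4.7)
I argue by contradiction: suppose $\deg_G(a) \geq 5$ and $\deg_G(b) \geq 5$. As every vertex of $K_5$ has degree $4$, $G$ is non-complete, so Observation~\ref{obs:345987asdlhf435asdf} yields $\omega(G)\leq 3$; in particular, the set $C$ of common neighbours of $a$ and $b$ is independent (two adjacent common neighbours would complete a $K_4$ with $a,b$). By Observation~\ref{obs:23fds45as79}, take $c\in C$ of degree $4$ and write $N(c)=\{a,b,c_1,c_2\}$.

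The core of the argument is to apply Observation~\ref{obs:23fds45as79} repeatedly to build a rigid local skeleton. Applying it to $ac$ and $bc$ (using heaviness of $a,b$ and independence of $C$) forces $c_1 \in N(a)\setminus N(b)$ and $c_2 \in N(b)\setminus N(a)$, each of degree $4$. Applying it to $cc_1$ forces $c_1 \sim c_2$; letting $c_3$ be the fourth neighbour of $c_1$, applying it to $c_1c_3$ forces $c_2\sim c_3$ (since $c\not\sim c_3$, as $N(c)$ is already pinned down), and by symmetry $c_3$ is also the fourth neighbour of $c_2$. Thus $\{c,c_1,c_2,c_3\}$ induces $K_4$ minus $cc_3$, and the neighbourhoods of $c, c_1, c_2$ are completely determined.

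Next, double-$\col$-criticality gives that $G-c_1-c_2$ is $2$-degenerate. In this graph $c$ has degree $2$ and can be peeled; the rigid skeleton then leaves $c_3$ as the only possible vertex of degree at most $2$, forcing $\deg c_3 = 4$. Peeling $c_3$ too and using that no vertex outside $\{a,b,c,c_1,c_2,c_3\}$ has any neighbour in $\{c,c_1,c_2\}$ (and has at most one in $\{c,c_1,c_2,c_3\}$, namely $c_3$ itself), the next peelable vertex of $G-\{c,c_1,c_2,c_3\}$ must be $a$ or $b$, and it must have degree exactly $5$ while being adjacent to $c_3$. After relabelling, assume $\deg b=5$ and $b\sim c_3$, and let $b_1$ be the fifth neighbour of $b$, which lies outside $\{a,c,c_1,c_2,c_3\}$.

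The contradiction now comes from two more applications of Observation~\ref{obs:23fds45as79}. Applied to $bb_1$, it requires a degree-$4$ common neighbour in $\{a,c,c_2,c_3\}\cap N(b_1)$; the known neighbourhoods of $c, c_2$ and the heaviness of $a$ rule out all candidates except $c_3$, forcing $c_3\sim b_1$. If moreover $c_3\sim a$, then $N(c_3)=\{a,b,c_1,c_2\}$, contradicting $c_3\sim b_1$; hence $c_3\not\sim a$ and $b_1$ is the fourth neighbour of $c_3$. But then Observation~\ref{obs:23fds45as79} applied to $b_1c_3$ demands a degree-$4$ common neighbour in $\{b,c_1,c_2\}\cap N(b_1)$, and the same verification (using the known neighbourhoods of $c_1, c_2$ and heaviness of $b$) rules out every candidate, the desired contradiction. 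The main obstacle is the bookkeeping of which vertex can attain small degree at each peeling stage; the rigid local structure forced in the second paragraph makes this tractable, and the ending is essentially forced once $b_1$ has been identified.
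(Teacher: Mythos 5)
Your proof is correct. Its first half coincides with the paper's argument: assuming both $a$ and $b$ have degree at least $5$, you use Observation~\ref{obs:23fds45as79} together with the $K_4$-freeness from Observation~\ref{obs:345987asdlhf435asdf} to force the configuration on $\{a,b,c,c_1,c_2\}$ with $c_1c_2\in E(G)$ (the paper's $c,d,e$). The endgames then diverge. The paper observes that $G[\{a,b,c,c_1,c_2\}]$ has minimum degree $3$, deduces that $G$ minus these five vertices is edgeless, extracts from the $2$-degeneracy of $G-\{c,c_1,c_2\}$ a vertex $f$ with $N(f)=\{a,b,c_1,c_2\}$, and finishes with an edge $ag$ whose deletion leaves that degree-$3$ subgraph intact. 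You instead keep chasing forced neighbourhoods: the fourth common neighbour $c_3$ of $c_1$ and $c_2$, the forced values $\deg(c_3)=4$ and $\deg(b)=5$ with $b\sim c_3$ obtained by peeling $G-c_1-c_2$, and finally the edge $b_1c_3$ with no common $4$-neighbour. I checked the peeling bookkeeping (only $c_3$, then only $a$ or $b$, can drop to degree at most $2$) and it is sound. Your route avoids the global ``edgeless outside'' step at the cost of a longer neighbourhood chase; note that it could also terminate one step earlier, since once $b_1\notin\{a,b,c,c_1,c_2,c_3\}$ is known to be adjacent to $c_3$, the graph $G-b_1-c_3$ still contains $G[\{a,b,c,c_1,c_2\}]$ of minimum degree $3$, so $\col(G-b_1-c_3)\geq 4$ already contradicts double-$\col$-criticality of the edge $b_1c_3$.
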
 
We shall say that a \emph{$k$-neighbour} of a vertex $x$ is a
neighbour of $x$ of degree $k$.
\begin{proof}[Proof of Observation~\ref{obs:345897awlekrf}.]
  Let $G$ denote a double-$\col$-critical graph with colouring number
  $5$. Then $\col(G) = \delta(G) + 1 = 5$. Let $ab$ denote an edge of
  $G$. Suppose that both $a$ and $b$ have degree greater than $4$ in
  $G$. By Observation~\ref{obs:23fds45as79}, there is a common
  $4$-neighbour $c$ of $a$ and $b$. We shall make repeated use of
  Observation~\ref{obs:23fds45as79} and
  Observation~\ref{obs:345987asdlhf435asdf}. The latter observation
  implies that $G$ contains no $4$-clique. There is a common
  $4$-neighbour $d$ of $a$ and $c$. Since $\omega(G) \leq 3$, $d$ is
  not adjacent to $b$. This implies that there is a common
  $4$-neighbour $e$ of $b$ and $c$ and $e$ is not identical to
  $d$. The vertex $e$ is not adjacent to $a$. The vertex $a$ has
  degree at least $5$, and so the common $4$-neighbour of $c$ and $d$
  must be $e$. We note that $\{ a,b,c,d,e \}$ induce a subgraph of $G$
  of minimum degree $3$. Hence $G - \{ a,b,c,d,e \}$ contains no
  edges. Moreover, $\col(G - \{c,d, e\}) \leq \col(G) - 2$, and so $G
  - \{ c,d,e \}$ contains a vertex $f$ of degree at most $2$. Since
  $a$ and $b$ both have degree at least $3$ in $G - \{c,d,e \}$ and
  $N(c, G) = \{a,b,d,e \}$, it follows that this vertex $f$ must be in
  the set $V(G) \setminus \{a,b,c,d,e \}$ and that $f$ is adjacent
  both $d$ and $e$. The vertex $f$ has degree $4$ in $G$. It also
  follows from the fact that $G - \{ a,b,c,d,e \}$ contains no edges
  that $f$ must be adjacent to both $a$ and $b$. Since $a$ has degree
  at least $5$ in $G$, it follows that $a$ must be adjacent to some
  vertex $g \in V(G) \setminus \{a,b,c,d,e,f \}$. Then $\col(G - a -
  g) \leq \col(G) - 2 = 3$. On the other hand, $\{ b,c,d,e,f \}$
  induce a subgraph of $G-a-g$ of minimum degree $3$, a
  contradiction. This contradiction implies that $G$ contains no two
  adjacent vertices both of which have degree greater than $4$.
\end{proof}
Recently, the first author obtained a characterisation of what he
called \emph{minimal critical graphs with minimum degree $4$}. It turns out
that our double-$\col$-critical graphs with colouring number $5$ are
such graphs, and so -- using the characterisation of minimal critical
graphs of minimum degree $4$ -- we obtain a characterisation of the
double-$\col$-critical graphs with colouring number $5$.

In the following result, which is the main result of this paper, we let
$Q_1$, $Q_2$, and $Q_3$ denote the graphs depicted in
Figure~\ref{fig:nonSquareCycleDoubleColCritical}.

\begin{theorem}\label{th:doubleCriticalColouringNumber5} A graph is
  double-$\col$-critical with colouring number $5$ if and only if it
  is isomorphic to $Q_1$, $Q_2$, $Q_3$, or the square of a cycle of
  length at least $5$.
\end{theorem}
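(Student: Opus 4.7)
The plan splits into a direct verification for the ``if'' direction and a structural reduction followed by an appeal to a characterisation result for the ``only if'' direction.

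For the ``if'' direction, I would treat the four families separately. The square $C_n^2$ of a cycle of length $n\ge 5$ is $4$-regular, so $\col(C_n^2)=5$ automatically: the graph itself has minimum degree $4$, which by~\eqref{eq:u8934jkasf3498} gives $\col\ge 5$, while a greedy ordering along the cycle certifies $\col\le 5$. For every edge $uv$ of $C_n^2$, the graph $C_n^2-\{u,v\}$ can be shown to be $2$-degenerate (equivalently, $\col\le 3$) by peeling off, from each of the two ``broken ends'' of the cyclic structure, the vertices that have lost neighbours in $\{u,v\}$ and are now of degree at most $2$. The three sporadic graphs $Q_1,Q_2,Q_3$ are small enough that both $\col=5$ and the double-$\col$-critical condition can be verified by inspecting each edge orbit under the automorphism group and exhibiting a $2$-degeneracy ordering of $G-V(e)$.

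For the ``only if'' direction, let $G$ be double-$\col$-critical with $\col(G)=5$. The observations already established pin down substantial structure: $\delta(G)=4$ by Observations~\ref{obs:345770asdfs456dflkj345} and~\ref{obs:existenceOfColVertexCriticalSubgraph}; $G$ is $2$-connected by Proposition~\ref{prop:98ysdfhlkjasdf3}; if $G$ is non-complete then $\omega(G)\le 3$ by Observation~\ref{obs:345987asdlhf435asdf}; the set of vertices of degree $\ge 5$ is independent by Observation~\ref{obs:345897awlekrf}; and every edge has a common $4$-neighbour by Observation~\ref{obs:23fds45as79}. Moreover, since $\col(G-V(e))\le 3$ is equivalent, via~\eqref{eq:u8934jkasf3498}, to $G-V(e)$ being $2$-degenerate, the double-$\col$-critical condition can be rephrased purely as: $\delta(G)=4$ and $G-V(e)$ contains no subgraph of minimum degree $\ge 3$ for every edge $e$. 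These are precisely the hypotheses under which the first author's characterisation of minimal critical graphs with minimum degree $4$ applies; invoking that characterisation yields at once that $G$ is isomorphic to $Q_1$, $Q_2$, $Q_3$, or $C_n^2$ for some $n\ge 5$, with the case $n=5$ accounting for $K_5=C_5^2$.

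The main obstacle is that the heart of the ``only if'' direction is carried by the external characterisation theorem, so the real work is first recognising that double-$\col$-criticality with $\col=5$ coincides with minimality of a critical graph with minimum degree $4$, and then extracting from that classification exactly the list $Q_1,Q_2,Q_3,C_n^2$. Without that black box, one would be forced into a direct structural analysis, naturally split according to whether $G$ is $4$-regular: in the regular case the common-$4$-neighbour property together with the $2$-connectivity and the absence of $K_4$ would have to be shown to force a cyclic arrangement of neighbourhoods giving $C_n^2$; in the non-regular case the independent set of vertices of degree $\ge 5$, each heavily constrained by the $2$-degeneracy demanded of $G-V(e)$ for every incident edge $e$, would progressively force the neighbourhoods into one of the three sporadic configurations $Q_1,Q_2,Q_3$, with the bulk of the bookkeeping concentrated around high-degree vertices.
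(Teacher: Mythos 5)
Your reduction to Kriesell's characterisation is where the argument breaks down. Being double-$\col$-critical with colouring number $5$ does imply that $G$ is a minimal critical graph in $\mathcal{C}$ (every edge is essential by Observation~\ref{obs:345897awlekrf}, since one endvertex has degree $4$, and critical by Observation~\ref{obs:23fds45as79}, since the endvertices have a common $4$-neighbour), but the converse fails badly: Theorem~\ref{T5} describes a vastly larger family, namely all edge-disjoint unions of the nine bricks glued along an arbitrary connected multihypergraph subject to (TB), of which $Q_1$, $Q_2$, $Q_3$ are only the three smallest non-cycle-square members. For instance, a long chain of $K_5^{\triangledown}$ bricks is minimal critical but not double-$\col$-critical. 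So the characterisation does not ``yield at once'' the list $Q_1,Q_2,Q_3,C_n^2$; what it yields is a brick decomposition, and the heart of the paper's proof is the subsequent elimination argument. That argument uses precisely the one consequence of double-$\col$-criticality that you correctly isolate but then never deploy --- that $G-V(e)$ has no subgraph of minimum degree at least $3$ for any edge $e$ --- to rule out, in turn, $1$-hyperedges and $2$-hyperedges, then triangular bricks, then attachment vertices of degree exceeding $4$, and finally to force the three hyperedges meeting the attachment vertices of any given hyperedge to coincide, leaving exactly two $\triangledown$-bricks glued along their three degree-$2$ vertices, i.e.\ $Q_1$, $Q_2$, or $Q_3$.

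In short, your proposal identifies ``double-$\col$-critical with $\col=5$'' with ``minimal critical of minimum degree $4$'', and that identification is false in one direction; the conditions you feed into Theorem~\ref{T5} (essential and critical edges) are strictly weaker than double-$\col$-criticality, and without the post-processing of the resulting brick decomposition the ``only if'' direction is incomplete. Your setup of properties (a)--(d) and your sketch of the ``if'' direction (which the paper leaves to the reader) are fine.
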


The graph $Q_2$ is the dual of the Herschel graph which is the smallest nonhamiltonian polyhedral graph.

In order to prove Theorem~\ref{th:doubleCriticalColouringNumber5}, we
first need to introduce a bit of notation and state the abovementioned
characterisation of minimal critical graphs with minimum degree $4$.

For the remaining part of this section we shall be using the following
notation. We shall let $\mc{C}$ denote the set of simple connected
graphs of minimum degree at least $4$. An edge $e$ of a graph $G$ in
${\cal C}$ is {\em essential} if the graph $G-e$ obtained from $G$ by
deleting $e$ is not in ${\cal C}$, and let us call $e$ {\em critical}
if the graph $G/e$ obtained by contracting $e$ and simplifying is not
in ${\cal C}$. An edge $e$ is essential if and only if $e$ is a bridge
or at least one of its endvertices has degree $4$; and $e$ is critical
if and only if the endvertices of $e$ have a common $4$-neighbour or
$N(V(e), G)$ consists of three common neighbours of the endvertices of
$e$. We are now interested in the {\em minimal critical} graphs in
${\cal C}$, that is, graphs $G \in \mc{C}$ with the property that each
edge of $G$ is both essential and critical.

\begin{figure}
  \begin{center} 
\scalebox{0.65}{\input{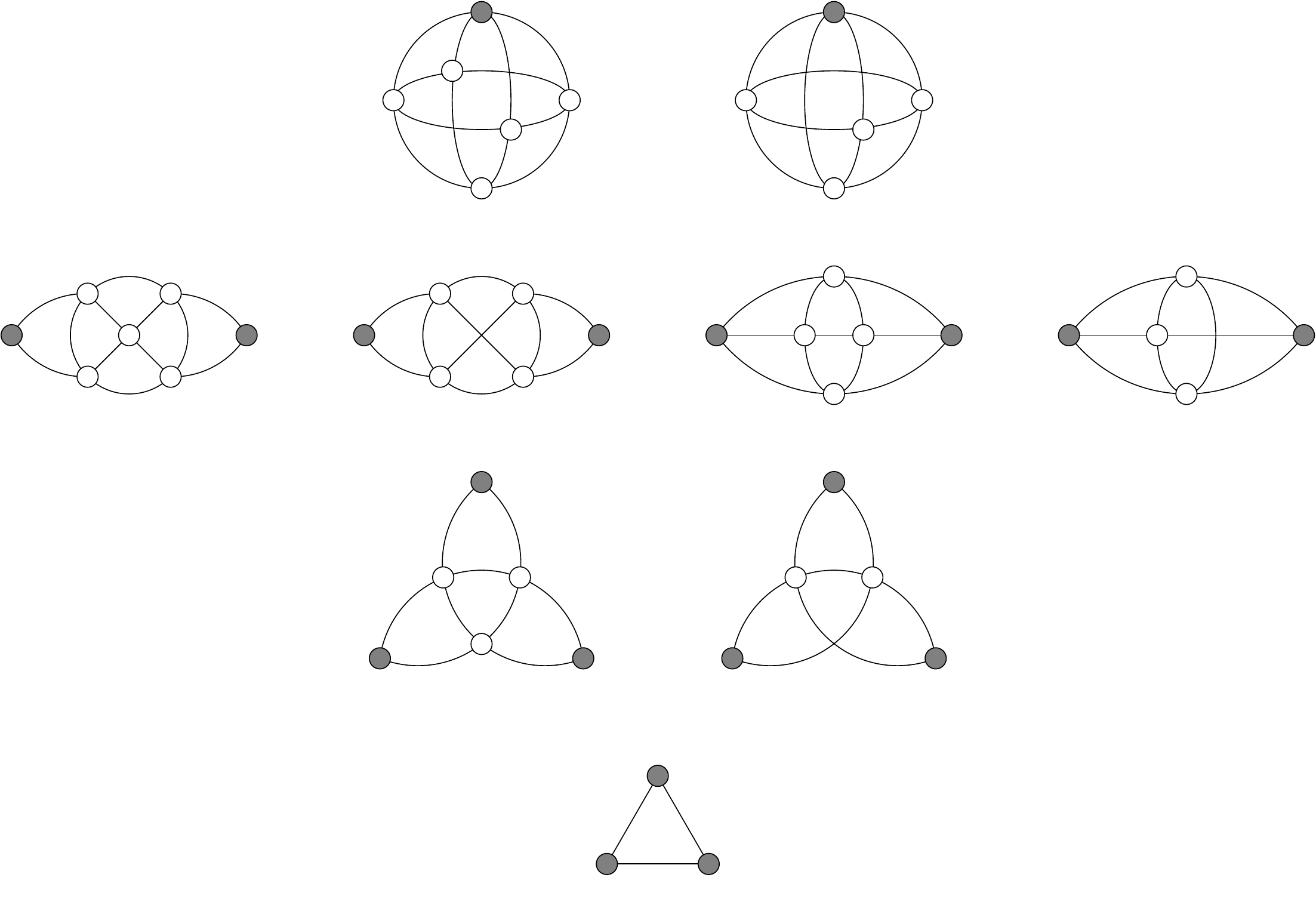_t}}
  \end{center}
  \caption{\label{F1} The nine bricks. Vertices of attachment are displayed solid.}
\end{figure}

For the description of the minimal critical graphs in $\mc{C}$, we
shall consider a number of \emph{bricks}, that is, any graph
isomorphic to one of the following nine graphs: $K_5$, $K_{2,2,2}$,
$K_5^-$, $K_{2,2,2}^-$, $K_5^\triangledown$,
$K_{2,2,2}^\triangledown$, $K_5^{\triangleright \triangleleft}$,
$K_{2,2,2}^{\triangleright \triangleleft}$, or $K_3$ which are
depicted in Figure \ref{F1}. Each brick comes together with its
{\em vertices of attachment}: For $K_5$ and $K_{2,2,2}$, this is an
arbitrary single vertex, for the other seven bricks these are its
vertices of degree less than $4$. The remaining vertices of the brick
are its {\em internal vertices}, and the edges connecting two inner
vertices are called its {\em internal edges}. Observe that every brick
$B$ has one, two, or three vertices of attachment, and that they are
pairwise nonadjacent unless $B$ is the triangle, that is, $K_3$.

It turns out that the minimal critical graphs from ${\cal C}$ are
either squares of cycles of length at least $5$, or they are the edge
disjoint union of bricks, following certain rules. This is made
precise in the following theorem.
\begin{theorem}[Kriesell~\cite{Kriesell:1325227}] \label{T5}
  A graph is a minimal critical graph in ${\cal C}$ if and only if it is the square of a cycle of length at least $5$ or 
  arises from a connected multihypergraph $H$ of minimum degree at least $2$ with at least one edge
  and $|V(e)| \in \{1,2,3\}$ for all hyperedges $e$ 
  by replacing each hyperedge $e$ by a brick $B_e$ (see Figure \ref{F1}) such that the vertices of attachment of $B_e$ are those in $V(e)$ and at the same time the only objects of $B_e$ contained in more than one brick, and

  \begin{itemize}
    \item[\emph{(TB)}] the brick $B_e$ is triangular only if each vertex $x \in V(e)$ is incident with precisely one hyperedge $f_x$ different from $e$ and the corresponding brick $B_{f_x}$ is neither $K_5$, $K_5^-$, $K_{2,2,2}$, nor $K_{2,2,2}^-$, and, for any other vertex $y \in V(e) \setminus \{ x \}$ and hyperedge $f_y$ containing $y$ but distinct from $e$, we have
\begin{itemize}
\item[\emph{(i)}] $V(f_x) \cap V(f_y) \not= \emptyset$ only if not both of $B_{f_x}$ and $B_{f_y}$ are triangular, and 
\item[\emph{(ii)}] $f_x = f_y$ only if $B_{f_x}$ is $K_5^{\triangleright \triangleleft}$ or $K_{2,2,2}^{\triangleright \triangleleft}$.
\end{itemize}
  \end{itemize}
\end{theorem}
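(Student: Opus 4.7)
The plan is to prove the two directions of Theorem~\ref{T5} separately; the \emph{if} direction is a direct structural check on each of the nine bricks, while the \emph{only if} direction is an induction that peels off one brick at a time from a minimal critical graph.

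For the \emph{if} direction, fix a graph $G$ assembled from bricks $(B_e)_{e \in E(H)}$ according to the recipe. Connectedness of $G$ follows from connectedness of $H$. Each internal vertex of a brick lies in only one brick and, by inspection of Figure~\ref{F1}, has degree exactly $4$ there; every vertex of attachment belongs to at least two hyperedges of $H$ by the assumption of minimum degree at least $2$, and the partial degrees contributed by each incident brick (read off from Figure~\ref{F1}) sum to at least $4$, so $G \in \mathcal{C}$. Every edge of $G$ lies in a unique brick $B_e$: if the edge has an internal endvertex, then that endvertex has degree $4$ in $G$ and the edge is essential; otherwise $B_e = K_3$ and condition (TB) supplies a $4$-valent endvertex from an adjacent non-triangular brick. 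Criticality is then a per-brick verification: for each of the nine bricks one checks that every edge either admits a common $4$-neighbour of its endvertices inside the brick, or sits in a configuration witnessing $|N(V(e), G)| = 3$. Condition~(TB) is exactly what ensures this property survives at edges incident to attachment vertices of a triangle brick.

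For the \emph{only if} direction, let $G$ be a minimal critical graph in $\mathcal{C}$. I would first dispose of the square-of-a-cycle case: if $G$ is $4$-regular then the criticality of each edge forces every pair of adjacent vertices to have a common $4$-neighbour, and tracing these common neighbours around $G$ reconstructs the underlying cycle of length $\geq 5$. Otherwise $G$ contains a vertex of degree at least $5$ and hence, by essentiality, an adjacent vertex $v$ of degree exactly $4$. I would analyse the $2$-neighbourhood of $v$ using the criticality of each edge incident to $v$: every such edge either supplies a common $4$-neighbour or forces $N(V(e),G)$ to consist of exactly three common neighbours of its endvertices. A case analysis, propagated by applying the same dichotomy to the edges within $N(v) \cup N(N(v))$ and combined with essentiality at each such edge, pins the induced structure around $v$ down to be one of the nine bricks in Figure~\ref{F1}, with the attachment vertices being exactly those that carry edges leaving the configuration. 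Removing the internal vertices of this brick from $G$ leaves a smaller graph to which the induction hypothesis applies, and reattaching the brick to the hypergraph produced inductively yields the required decomposition.

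The main obstacle will be the case analysis in the \emph{only if} direction: showing that the local structure at a degree-$4$ vertex is literally isomorphic to one of the nine listed bricks, rather than some other small $4$-regular-ish configuration, and extracting (TB)(i) and (TB)(ii) as the precise gluing conditions dictated by preserving essentiality and criticality across shared attachment vertices. The two alternatives in the definition of a critical edge split the bricks into essentially disjoint families, and the triangle-brick case is the most delicate, because its attachment vertices have degree only $2$ within the brick and thus force the adjacent bricks to make up the remaining degree, connectivity and criticality in a way that exactly mirrors (TB).
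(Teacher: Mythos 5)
Before anything else: the paper offers no proof of Theorem~\ref{T5}. It is imported verbatim from Kriesell~\cite{Kriesell:1325227} and used as a black box, so there is no in-paper argument to measure your proposal against; I can only judge it on its own terms. Doing so, the \emph{if} direction is a plausible (if thin) checklist, but the \emph{only if} direction contains a concrete error and a decisive gap. The error is your opening reduction: you claim that a $4$-regular minimal critical graph must be the square of a cycle, recovered by ``tracing common neighbours''. This is false, and the counterexamples are in this very paper: the graphs $Q_1$, $Q_2$, $Q_3$ of Figure~\ref{fig:nonSquareCycleDoubleColCritical} (two copies of $K_5^{\triangledown}$ or $K_{2,2,2}^{\triangledown}$ glued along their three attachment vertices) are $4$-regular, connected, and minimal critical in $\mathcal{C}$ --- the proof of Theorem~\ref{th:doubleCriticalColouringNumber5} verifies exactly that every double-$\col$-critical graph with colouring number $5$, these three included, satisfies essentiality and criticality of all edges --- yet none of them is a square of a cycle. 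In a $4$-regular graph, criticality of an edge says only that the edge lies in a triangle, which is nowhere near forcing the $C_n^2$ structure, so the regular and non-regular cases cannot be split the way you propose.

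The second problem is that the entire content of the theorem --- that the local configuration around a degree-$4$ vertex is \emph{literally} one of the nine bricks of Figure~\ref{F1} and that (TB)(i)--(ii) are precisely the admissible gluings --- is the part you defer with ``a case analysis \dots pins the induced structure around $v$ down to be one of the nine bricks''. Announcing that a case analysis will succeed is not a proof of it; this is where all of the difficulty of Kriesell's result lives, and you yourself flag it as ``the main obstacle''. The induction you set up also does not obviously close: after deleting the internal vertices of a brick, the remaining graph generally leaves $\mathcal{C}$ (the attachment vertices drop below degree $4$) and need not be minimal critical, so the induction hypothesis cannot be applied to it as stated; one would need a relative version of the theorem for graphs with prescribed low-degree boundary vertices. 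As it stands, the proposal is a research plan with one false step, not a proof; for a complete argument the reader must be sent to \cite{Kriesell:1325227}.
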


\begin{proof}[Proof of Theorem~\ref{th:doubleCriticalColouringNumber5}.]
In order to prove the desired result, we prove the following
equivalent statement. \\

\noindent A graph is double-$\col$-critical with colouring number $5$
if and only if it is the square of a cycle of length at least $5$ or
one of the three graphs obtained by taking the union of two graphs
$G_1$ and $G_2$ such that $G_i \simeq K_5^\triangledown$ or $G_i
\simeq K_{2,2,2}^\triangledown$ for $i \in \{1,2\}$ and $x \in V(G_1)
\cap V(G_2)$ if and only if $x$ has degree $2$ in $G_1$ and degree $2$
in $G_2$. \\

The `if'-part of the statement above is straightforward to verify and
it is left to the reader. \\

Let $G$ denote an arbitrary double-$\col$-critical graph with
colouring number $5$.  It follows from the definition of
double-$\col$-critical graphs,
Observation~\ref{obs:existenceOfColVertexCriticalSubgraph},
Observation~\ref{obs:345770asdfs456dflkj345},
Observation~\ref{obs:23fds45as79}, and
Observation~\ref{obs:345897awlekrf} that $G$ has the following
properties.
\begin{itemize}
  \item[(a)] $G$ has minimum degree $4$;
  \item[(b)] if $x$ and $y$ are adjacent vertices, then at least one of them has degree $4$;
  \item[(c)] if $x$ and $y$ are adjacent vertices, then they have a common $4$-neighbour; and
  \item[(d)] if $x$ and $y$ are adjacent in $G$, then $G- x - y$ has no induced subgraph of minimum degree at least $3$.
\end{itemize}
By (a), $G$ is in $\mc{C}$. By (b), every edge of $G$ is essential,
and, by (c), every edge of $G$ is critical. Thus, $G$ is minimal critical in
${\cal C}$, and so Theorem~\ref{T5} applies. Suppose that $G$ is not
the square of a cycle. Then $G$ has a representation by a multihypergraph $H$ as described in Theorem~\ref{T5}.

If $e$ is a $1$-hyperedge then the unique attachment vertex of the
corresponding brick $B_e$ in $G$ is a cutvertex of $G$, a
contradiction to Proposition~\ref{prop:98ysdfhlkjasdf3}.

Suppose that there exists a $2$-hyperedge $e$ with $V(e)=\{u,v\}$. If
$B_e$ is $K_5^{\triangleright \triangleleft}$ or
$K_{2,2,2}^{\triangleright \triangleleft}$ then $B_e-V(e)$ is an
induced subgraph of $G$ of minimum degree $3$, and since the vertex $u
\in V(e)$ has only two neighbours in that subgraph, there is an edge
in $G - ( V(B_e) \setminus V(e) )$, contradicting (d).  If, otherwise,
$B_e$ is $K_5^-$ or $K_{2,2,2}^-$ then, by Theorem~\ref{T5}, $B_e$ is
an induced subgraph of $G$ of minimum degree $3$. By (a), there is a
vertex in $V(G) \setminus V(B_e)$, and it has a neighbour in $V(G)
\setminus V(B_e)$. This contradicts (d). Hence there are only
$3$-hyperedges in $H$.

 Suppose that $H$ contains a $3$-hyperedge $e$ for which the
 corresponding brick $B_e$ is triangular. It follows from (a) and (d)
 that some vertex $q \in V(G) \setminus V(B_e)$ is adjacent to at
 least two vertices in $V(B_e)$. The vertex $q$ is not adjacent to all
 three vertices of $V(B_e)$, since otherwise $G[V(B_e) \cup \{ q \}]$
 would induce a $4$-clique in $G$ which contradicts
 Observation~\ref{obs:345987asdlhf435asdf}. Let $x$ and $y$ denote the
 neighbours of $q$ in $V(B_e)$. By Theorem~\ref{T5}~(TB), $x$ is
 incident to exactly one hyperedge $f_x$ different from
 $e$. Similarly, $y$ is incident to exactly one hyperedge $f_y$
 different from $e$. If $f_x = f_y$, then, by
 Theorem~\ref{T5}~(TB.ii), $B_{f_x}$ is $K_5^{\triangleright
   \triangleleft}$ or $K_{2,2,2}^{\triangleright \triangleleft}$, in
 particular, $f_x$ is a $2$-hyperedge, a contradiction. Hence $f_x
 \neq f_y$ and so, since $q \in V(f_x) \cap V(f_y)$, it follows from
 Theorem~\ref{T5}~(TB.i) that not both $B_{f_x}$ and $B_{f_y}$ are
 triangular bricks. The fact that $f_x$ and $f_y$ are distinct and $q
 \in V(f_x) \cap V(f_y)$ implies that $q$ is an attachment vertex of
 both $B_{f_x}$ and $B_{f_y}$. Since $q$ is adjacent to $x$ and both
 $q$ and $x$ are attachment vertices, it follows that $B_{f_x}$ must
 be triangular. Similarly, $B_{f_y}$ must be triangular, and so we
 have obtained a contradiction. This shows that each hyperedge in $H$
 is of the type $K_5^\triangledown$ or $K_{2,2,2}^\triangledown$.

Let $e$ denote an arbitrary $3$-hyperedge of $H$. If there are two
vertices $x,y \in V(e)$ of degree exceeding $4$ in $G$ then $G-(V(B_e)
\setminus \{x,y\})$ has minimum degree at least $3$, contradicting (d)
applied to any internal edge of $B_e$.  Therefore, if there is a
vertex $x \in V(e)$ of degree exceeding $4$ in $G$, then the two
vertices $y,z \in V(e) \setminus \{ x \}$ are incident with precisely
one further $3$-hyperedge $f_y$ and $f_z$, respectively, both distinct
from $e$. If $f_y \not= f_z$ then one may argue as above that $G -
V(B_e- x)$ has minimum degree at least $3$, contradicting (d) applied
to any internal edge of $B_e$.  Hence $f_y=f_z=:f$. Let $w$ be the
vertex in $V(f) \setminus \{y,z\}$. If $w \not= x$ then $\{w,x\}$
forms a $2$-separator, and otherwise $w=x$ is a cutvertex as $x$ has
degree exceeding $4$.  In either case, $G- (V(B_e - x) \cup V(B_f))$
has minimum degree at least $3$, again contradicting (d).

Hence all vertices of attachment have degree $4$ in $G$. Let $e$ denote
a $3$-hyperedge in $H$, and let $x$, $y$, and $z$ denote the vertices
of $V(e)$. Again let $f_x,f_y,f_z$ denote the unique $3$-hyperedge
distinct from $e$ incident with $x,y,z$, respectively.  If they are
pairwise distinct then $G-V(B_e)$ has minimum degree at least $3$,
contradiction to (d).  If $f:=f_y=f_z \not= f_x$ then let $w$ be the
vertex in $V(f)$ distinct from $y,z$. As $f_x \not=f$, we have $w
\not= x$, so that $G-(V(e) \cup V(f))$ has minimum degree at least
$3$, contradicting (d), unless there is a vertex in $V(f_x)$ adjacent
to both $x$ and $w$; in this latter case, the unique $3$-hyperedge
distinct from $f$ incident with $w$ must be $f_x$, and so the vertex
$u$ in $V(f_x) \setminus \{w,x\}$ is a cutvertex of $G$, a
contradiction to Proposition~\ref{prop:98ysdfhlkjasdf3}. Hence
$f_x=f_y=f_z$, and the desired statement follows.
\end{proof}
Given our success in characterising the double-$\col$-critical graphs
with colouring number $5$, we venture to ask for a characterisation of
the double-$\col$-critical graphs with colouring number $6$. If $G$ is
a double-$\col$-critical graph, then $G + K_k$ is a
double-$\col$-critical graph with $\col(G + K_k) = \col(G) + k$ (see
Proposition~\ref{prop:completeJoinAndDoubleColCritical}). This implies
that the graphs $Q_1 + K_1$, $Q_2 + K_1$, $Q_3 + K_1$, and $C + K_1$,
where $C$ is the square of any cycle of length at least $5$, are all
double-$\col$-critical graphs with colouring number $6$. These are not
the only double-$\col$-critical graphs with colouring number $6$; the
icosahedral graph is yet another double-$\col$-critical graph with
colouring number $6$. This latter fact was also observed by
Stiebitz~\cite[p.\ 323]{MR1413661}, although in a somewhat different
setting.\label{page:doubleColCritical_col_6} The standard $6$-regular
toroidal graphs obtained from the toroidal grids by adding all
diagonals in the same direction have colouring number $7$ and are
double-$\col$-critical.

\paragraph{Complete joins of double-col-critical graphs.}
In~\cite{KawarabayashiPedersenToftEJC2010}, it was observed that
  if $G$ is the complete join $G_1 + G_2$, then $G$ is
  double-critical if and only if both $G_1$ and $G_2$ are
  double-critical. Next we prove that the `if'-part of the
  analogous statement for double-$\col$-critical graphs is true. The
  `only if'-part is not true, as follows from considering the
  double-$\col$-critical graph $C_ 6^2$: We have $C_6^2 \simeq C_4 +
  \overline{K_2}$ but neither $C_4$ nor $\overline{K_2}$ is
  double-$\col$-critical.
\begin{proposition} \label{prop:completeJoinAndDoubleColCritical}
  If $G_1$ and $G_2$ are two disjoint double-$\col$-critical graphs, then the complete
  join $G_1 + G_2$ is also double-$\col$-critical with
\begin{equation*}
\col(G_1 + G_2) = \min \{ \col(G_1) + n(G_2), \col(G_2) + n(G_1) \} 
\end{equation*}
\end{proposition}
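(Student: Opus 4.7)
The plan is to first pin down the colouring number of $G_1 + G_2$ and then check double-$\col$-criticality edge by edge via a short case analysis driven by Proposition~\ref{prop:completeJoinsAndColouringNumber}. Since $G_1$ and $G_2$ are double-$\col$-critical, Observation~\ref{obs:345770asdfs456dflkj345} says each is $\col$-vertex-critical, and then Observation~\ref{obs:existenceOfColVertexCriticalSubgraph} gives $\col(G_i) = \delta(G_i) + 1$ for $i \in \{1,2\}$. This is exactly the hypothesis that makes the last clause of Proposition~\ref{prop:completeJoinsAndColouringNumber} apply, yielding $\col(G_1 + G_2) = \min\{\col(G_1) + n(G_2), \col(G_2) + n(G_1)\}$, which is the claimed identity.

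To verify that every edge $e$ of $G_1 + G_2$ is double-$\col$-critical, I would split into three cases depending on where the endvertices of $e$ lie. In the first case, both endvertices of $e$ are in $V(G_1)$, so $(G_1 + G_2) - V(e) = (G_1 - V(e)) + G_2$; the upper bound in Proposition~\ref{prop:completeJoinsAndColouringNumber} (which only needs non-emptiness) together with $\col(G_1 - V(e)) \leq \col(G_1) - 2$ (from $G_1$ being double-$\col$-critical) and $n(G_1 - V(e)) = n(G_1) - 2$ gives
\[
\col\bigl((G_1+G_2) - V(e)\bigr) \leq \min\{\col(G_1) + n(G_2), \col(G_2) + n(G_1)\} - 2 = \col(G_1+G_2) - 2.
\]
The second case, with both endvertices in $V(G_2)$, is symmetric. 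In the third case, $e = uv$ with $u \in V(G_1)$ and $v \in V(G_2)$, so $(G_1+G_2) - V(e) = (G_1 - u) + (G_2 - v)$; applying the same upper bound together with $\col(G_i - x) \leq \col(G_i) - 1$ for a single vertex $x$ (which follows from $\col$-vertex-criticality via Observation~\ref{obs:395afhq9348alsdif45ur} and Observation~\ref{obs:ColDropsByOneExactly}) gives exactly the analogous bound $\col(G_1 + G_2) - 2$.

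The only real bookkeeping concerns the degenerate sub-cases where one of the summands becomes empty after deletion, for instance if $G_1 \simeq K_2$ in the first case or if $G_i \simeq K_1$ in the third; in those sub-cases the Proposition is vacuous, but the claim can be checked by hand since the complete join with $K_0$ is just the other summand and $K_1 + H$ merely adjoins a universal vertex, so the claimed inequality $\col(\cdot) \leq \col(G_1 + G_2) - 2$ reduces to the $\col$-vertex-criticality of $G_1$ or $G_2$ that we already have in hand. I do not anticipate any conceptual obstacle, as the proof is essentially the observation that the upper bound of Proposition~\ref{prop:completeJoinsAndColouringNumber} already bakes in an additive behaviour under disjoint vertex deletions from the two summands; the potential pitfall is simply to remember to invoke double-$\col$-criticality (two-vertex deletion within one summand) in cases one and two and only $\col$-vertex-criticality (one-vertex deletion in each summand) in case three.
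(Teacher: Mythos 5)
Your proof is correct and follows essentially the same route as the paper: establish $\col$-vertex-criticality of the summands, invoke Proposition~\ref{prop:completeJoinsAndColouringNumber} for the value of $\col(G_1+G_2)$, and then bound $\col((G_1+G_2)-V(e))$ case by case using the upper bound of that proposition. You in fact supply slightly more detail than the paper, which omits the cross-edge case as ``similar'' and does not discuss the degenerate sub-cases where a summand becomes empty after deletion.
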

\begin{proof}
  Let $G_1$ and $G_2$ denote two disjoint double-$\col$-critical
  graphs. Then, by Observation~\ref{obs:345770asdfs456dflkj345}, both
  $G_1$ and $G_2$ are $\col$-vertex-critical, and so, by
  Proposition~\ref{prop:completeJoinsAndColouringNumber},
\begin{equation*}
\col(G_1 + G_2) = \min \{ \col(G_1) + n(G_2), \col(G_2) + n(G_1) \} 
\end{equation*}
We need to prove that $\col((G_1 + G_2) - x - y) \leq \col(G_1 + G_2)
- 2$ for every edge $e = xy \in E(G)$; by symmetry, it suffices to
consider (1) $x,y \in V(G_1)$ and (2) $x \in V(G_1)$ and $y \in V(G_2)$. Suppose $x,y \in V(G_1)$. Then
\begin{align*}
\col((G_1 + G_2) - x - y) & = \col( (G_1 - x - y) + G_2 ) \\
& \leq \min \{ \col(G_1 - x - y) + n(G_2), \col(G_2) + n(G_1 - x - y) \} \\
& \leq \min \{ \col(G_1) - 2 + n(G_2), \col(G_2) + n(G_1) - 2   \} \\
& =  \min \{ \col(G_1) + n(G_2), \col(G_2) + n(G_1)  \} - 2
\end{align*}
where we applied
Proposition~\ref{prop:completeJoinsAndColouringNumber} and the fact
that $G_1$ is double-$\col$-critical. A similar argument applies in
case (2). We omit the details. 
\end{proof}
Proposition~\ref{prop:completeJoinAndDoubleColCritical} and the fact
that both $C_6^2$ and $K_t$ are double-$\col$-critical immediately
implies the following result, which, in particular, shows that, for
each integer $k \geq 6$, there is a non-regular double-$\col$-critical
graph with colouring number $k$.
\begin{corollary}
  For any positive integer $t$, the graph $G_t : = C_6^2 + K_t$ is
  a double-$\col$-critical graph with $\col(G_t) = t+5$, $\delta(G_t) =
  n(G_t) - 2$ and $\Delta(G_t) = n(G_t) - 1$.
\end{corollary}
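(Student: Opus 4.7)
The plan is to apply Proposition~\ref{prop:completeJoinAndDoubleColCritical} directly to the two factors $C_6^2$ and $K_t$, and then to compute all the relevant numerical invariants by inspection.

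First I would verify that both factors are double-$\col$-critical. That $C_6^2$ is double-$\col$-critical with colouring number $5$ is recorded explicitly in the paragraph opening Section~2 of the paper, and it is trivial for $K_t$: removing any edge $e$ leaves the $(t{-}2)$-clique $K_{t-2}$, hence $\col(K_t - V(e)) = t-2 = \col(K_t) - 2$. With both factors double-$\col$-critical, Proposition~\ref{prop:completeJoinAndDoubleColCritical} immediately yields that $G_t = C_6^2 + K_t$ is double-$\col$-critical with
$$\col(G_t) \;=\; \min\{\col(C_6^2) + n(K_t),\; \col(K_t) + n(C_6^2)\} \;=\; \min\{5+t,\; t+6\} \;=\; t+5.$$

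It remains to compute $\delta(G_t)$ and $\Delta(G_t)$. The vertex set of $G_t$ has cardinality $n(G_t) = 6 + t$, so $n(G_t) - 1 = t+5$ and $n(G_t) - 2 = t+4$. In the complete join, each vertex of the copy of $C_6^2$ retains its $4$ neighbours inside $C_6^2$ and gains the $t$ vertices of $K_t$, giving degree $t+4$; each vertex of $K_t$ retains its $t-1$ neighbours inside $K_t$ and gains the $6$ vertices of $C_6^2$, giving degree $t+5$. Hence $\delta(G_t) = t+4 = n(G_t)-2$ and $\Delta(G_t) = t+5 = n(G_t)-1$, as claimed.

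There is no real obstacle here; the work has been done in Proposition~\ref{prop:completeJoinAndDoubleColCritical} and in the verification that $C_6^2$ is double-$\col$-critical. The corollary is essentially a packaging step that exhibits $G_t$ as a non-regular (since $\delta \neq \Delta$) double-$\col$-critical graph of colouring number $t+5$ for every positive integer $t$.
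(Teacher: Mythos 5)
Your proof is correct and follows exactly the route the paper intends: the corollary is stated as an immediate consequence of Proposition~\ref{prop:completeJoinAndDoubleColCritical} together with the fact that $C_6^2$ and $K_t$ are both double-$\col$-critical, and your degree computations for the join are right. (For $t=1$ the claim that $K_t$ is double-$\col$-critical holds vacuously, since there are no edges; your explicit computation $\col(K_t-V(e))=t-2$ covers $t\ge 2$.)
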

\section{Double-col-critical edges}
In~\cite{KawarabayashiPedersenToftEJC2010}, Kawarabayashi, the second
author, and Toft initiated the study of the number of double-critical
edges in graphs. In this section, we study the number of
double-$\col$-critical edges in graphs. Kawarabayashi, the second
author, and Toft proved the following theorem for which we shall prove
an analogue for the colouring number.

The complete join $C_n + K_1$ of a cycle $C_n$ and a single vertex is
referred to as a \emph{wheel}, and it is denoted $W_n$. If $n$ is odd,
we refer to $W_n$ as an \emph{odd wheel}.
\begin{theorem}[Kawarabayashi, Pedersen \& Toft~\cite{KawarabayashiPedersenToftEJC2010}] \label{th:chi4_and_double_critical_edges}
  If $G$ denotes a $4$-critical non-complete graph, then $G$ contains at
  most $m(G)/2$ double-critical edges. Moreover, $G$ contains
  precisely $m(G)/2$ double-critical edges if and only if $G$ is an odd wheel of order at least $6$.
\end{theorem}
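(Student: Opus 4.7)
My plan is to prove the bound $|D| \leq m(G)/2$ by constructing an injection $\varphi$ from the set $D$ of double-critical edges into $E(G) \setminus D$, and then to characterise the equality case as the odd wheels of order at least $6$.

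First I would establish a structural lemma: for any $e = uv \in D$ and any bipartition $(A, B)$ of the bipartite graph $G - V(e)$, both $u$ and $v$ are adjacent to vertices of each of $A$ and $B$. The proof is by contradiction: assuming otherwise, a short case analysis on the intersections of $N(u)$ and $N(v)$ with $A$ and $B$ shows that the $2$-colouring $A \mapsto 1$, $B \mapsto 2$ can always be extended by colours from $\{1, 2, 3\}$ on $u$ and $v$ to yield a proper $3$-colouring of $G$, contradicting $\chi(G) = 4$. A parallel observation is that in every proper $3$-colouring of $G - e$ the vertices $u$ and $v$ must share a colour (else the colouring extends to $G$), and Kempe-chain arguments then produce vertices linking $u$ to $v$ through the two other colour classes.

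With this lemma in hand, I would construct the injection $\varphi$ as follows: for each $e = uv \in D$ I fix a bipartition $(A_e, B_e)$ of $G - V(e)$ and, after imposing a linear order on $V(G)$ and an orientation of $e$, choose $\varphi(e)$ to be a specific edge incident to $u$ or $v$ whose other endpoint lies in $A_e$ or $B_e$ (for instance the smallest such neighbour of the tail of $e$ in $A_e$). The edge $\varphi(e)$ is not double-critical because removing its endpoints leaves a subgraph that still contains an odd structure through the remaining endpoint of $e$, so it cannot be bipartite. Injectivity is then established by showing that $\varphi(e)$, together with the associated bipartition, recovers $e$ uniquely. For the equality case $|D| = m(G)/2$ the mapping $\varphi$ becomes a bijection, and a detailed structural analysis shows that all double-critical edges must share a common endpoint $c$ while the non-double-critical edges must form an odd cycle through the other endpoints, yielding $G \cong W_n$ for some odd $n \geq 5$.

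The main obstacle is the injectivity of $\varphi$: distinct double-critical edges may plausibly share the same canonical witness, so the construction must exploit either the rigidity of the bipartitions $(A_e, B_e)$ (which are unique up to swap when $G - V(e)$ is connected) or the structure of common neighbourhoods of the endpoints of double-critical edges, combined with $4$-criticality. The equality analysis is also delicate, since \emph{a priori} $4$-critical graphs could admit many double-critical edges without being wheels; one must leverage the structural lemma together with classical properties of $4$-critical non-complete graphs (such as the absence of a separating complete subgraph, by Dirac) to rule out non-wheel configurations achieving equality.
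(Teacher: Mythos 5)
A preliminary remark: Theorem~\ref{th:chi4_and_double_critical_edges} is quoted from~\cite{KawarabayashiPedersenToftEJC2010} and is not proved in this paper, so there is no in-paper proof to compare yours against. The closest guide is the proof of the colouring-number analogue (Lemma~\ref{lem:col4_and_double_col_critical_edges} together with Proposition~\ref{prop:col4_and_double_col_critical_edges}), and the original chromatic-number argument follows the same pattern: one first shows that any two double-critical edges of a $4$-critical non-complete graph are incident, so that the double-critical edges form a star or a triangle, and then a short counting argument ($m(G)\geq \deg(v,G)+n(G)-1\geq 2\deg(v,G)$ for the star centre $v$, using that $G-v$ is connected and contains an odd cycle) gives both the bound and the equality case. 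Your opening lemma --- that for a double-critical edge $uv$ both $u$ and $v$ meet both classes of every bipartition of $G-u-v$ --- is correct and is indeed the standard starting point.

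The injection $\varphi$, however, is where your argument has a genuine gap, on both counts that you yourself flag. First, the claim that $\varphi(e)$ is never double-critical is unsupported and, for natural choices of the canonical witness, false. Take $G=W_5$ with hub $c$ and rim $v_1\dots v_5$: for $e=cv_1$ the bipartition of $G-c-v_1$ is $A_e=\{v_2,v_4\}$, $B_e=\{v_3,v_5\}$, and if the tail of $e$ is $c$ your rule yields $\varphi(e)=cv_2$, which is itself double-critical ($W_5-c-v_2$ is a path); the ``odd structure through the remaining endpoint of $e$'' simply is not there. So the map does not even land in $E(G)\setminus D$ without a careful, orientation-dependent choice that you have not specified or justified, and no general argument is given for why $G-u-a$ must be non-bipartite. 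Second, injectivity is asserted rather than proved: $\varphi$ maps into edges, not into (edge, bipartition) pairs, so ``recovering $e$ from $\varphi(e)$ together with the associated bipartition'' is not available, and nothing prevents two double-critical edges sharing an endpoint from receiving the same witness. The equality characterisation is likewise only a promise of ``a detailed structural analysis.'' As it stands this is a plan whose two decisive steps (well-definedness into $E(G)\setminus D$ and injectivity) are missing; I would recommend abandoning the global injection in favour of the pairwise-incidence lemma, which for $\chi$ follows from the fact that the endpoints of a double-critical edge have a common neighbour in each colour class of any $2$-colouring of $G-u-v$, combined with $\omega(G)\leq 3$.
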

The following result is just a slight reformulation of
Theorem~\ref{th:chi4_and_double_critical_edges}.
\begin{corollary} \label{cor:chi4_and_double_critical_edges}
  If $G$ denotes a $4$-chromatic graph with no $4$-clique, then $G$ contains at
  most $m(G)/2$ double-critical edges. Moreover, $G$ contains
  precisely $m(G)/2$ double-critical edges if and only if $G$ is an odd wheel of order at least $6$.
\end{corollary}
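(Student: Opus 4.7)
The plan is to derive the corollary from Theorem~\ref{th:chi4_and_double_critical_edges} by passing to a $4$-critical subgraph $H$ of $G$ that retains every double-critical edge of $G$, then applying the theorem to $H$ and chaining inequalities.

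The key preparatory observation is that in any $4$-chromatic graph, every double-critical edge $e = xy$ is also chromatically edge-critical, i.e., $\chi(G - e) \leq 3$. Starting from a proper $2$-colouring $(A, B)$ of $G - x - y$ (which is bipartite by the definition of double-criticality), one obtains a proper $3$-colouring of $G - e$ by assigning colour $1$ to $A$, colour $2$ to $B$, and colour $3$ to both $x$ and $y$. Contrapositively, any edge $e$ with $\chi(G - e) = \chi(G) = 4$ fails to be double-critical.

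Writing $D(\cdot)$ for the set of double-critical edges, I would then construct $H$ by iteratively deleting from the current subgraph $G'$ any edge $e$ with $\chi(G' - e) = 4$, stopping when no such edge remains. The resulting $H$ is $4$-critical, and non-complete because the hypothesis $K_4 \not\subseteq G$ passes to subgraphs. At each deletion step, $\chi(G' - e) = 4$ implies $\chi(G - e) = 4$ (by monotonicity of $\chi$ under edge-addition), so by the preparatory observation the removed edge was not in $D(G)$; hence $D(G) \subseteq E(H)$. Since $H - V(e) \subseteq G - V(e)$ is bipartite for every $e \in D(G)$, we further obtain $D(G) \subseteq D(H)$. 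Applying Theorem~\ref{th:chi4_and_double_critical_edges} to $H$ then yields
\[
|D(G)| \;\leq\; |D(H)| \;\leq\; m(H)/2 \;\leq\; m(G)/2,
\]
which is the desired upper bound.

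For the equality case $|D(G)| = m(G)/2$, tightness in all three inequalities forces $E(H) = E(G)$, so $H$ and $G$ differ at most by isolated vertices, and $|D(H)| = m(H)/2$; the theorem then identifies $H$ (and hence $G$) as an odd wheel of order at least $6$. The converse direction is immediate, since odd wheels of order at least $6$ are themselves $4$-critical and non-complete. The main care required is in the reduction step, ensuring that successive deletions of non-critical edges preserve the inclusion $D(G) \subseteq E(G')$; beyond that the argument is a clean chain of inequalities.
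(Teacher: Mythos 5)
Your proposal is correct and follows essentially the same route as the paper: both arguments pass to a $4$-critical subgraph that retains every double-critical edge and then invoke Theorem~\ref{th:chi4_and_double_critical_edges}, with the paper merely asserting (and you explicitly proving, via the $3$-colouring that reuses a colour on both endvertices) that a double-critical edge is a critical edge. The only cosmetic point is that after deleting non-critical edges you should also discard isolated vertices before calling $H$ ``$4$-critical''; this does not affect the edge counts or the chain of inequalities.
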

\begin{proof}
  Let $G$ denote a $4$-chromatic graph with no $4$-clique. If $e=xy$
  is a double-critical edge in $G$, then $e$ is a critical edge of $G$
  and $x$ is a critical vertex of $G$. We remove non-critical elements
  from $G$ until we are left with a $4$-critical subgraph $G'$. At no
  point did we remove an endvertex of a double-critical edge. Thus,
  the number of double-critical edges in $G$ is equal to the number of
  double-critical edges in $G'$. Clearly, $G'$ is a non-complete
  graph, and so, by Theorem~\ref{th:chi4_and_double_critical_edges},
  the number of double-critical edges in $G'$ is at most $m(G')/2$
  which is at most $m(G)/2$. The second part of the corollary now
  follows easily.
\end{proof}
The following result --- which is an analogue of
Theorem~\ref{th:chi4_and_double_critical_edges} with the chromatic
number replaced by the colouring number --- extends
Observation~\ref{obs:j3495yfh394tsdf}.

\begin{proposition} \label{prop:col4_and_double_col_critical_edges} If
  $G$ denotes a $4$-$\col$-critical non-complete graph, then $G$
  contains at most $m(G)/2$ double-$\col$-critical edges. Moreover,
  $G$ contains precisely $m(G)/2$ double-$\col$-critical edges if and
  only if $G$ is a wheel of order at least $6$.
\end{proposition}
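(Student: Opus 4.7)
The plan is a structure-and-count argument. Let $L := \{v \in V(G) : \deg(v,G) = 3\}$ and $H := V(G) \setminus L$; since $G$ is $4$-$\col$-critical, $\delta(G) = 3$ by Observation~\ref{obs:existenceOfColCriticalSubgraph}. First I would establish three preliminaries: $G$ is connected with no cut vertex (else $\col(G)$ equals the $\col$ of some proper subgraph, contradicting $\col$-criticality); $G$ is $K_4$-free (a $K_4$-subgraph would be a proper subgraph with $\col = 4$); and $H$ is an independent set (an edge $xy$ with $x,y \in H$ would leave $\delta(G - xy) \geq 3$, contradicting $\col(G - xy) \leq 3$).

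Next I analyse a double-$\col$-critical edge $e = xy$. By Observation~\ref{obs:3457089asdflkjh345}, $F := G - x - y$ is a forest, and comparing edge counts gives
\begin{equation*}
\deg(x,G) + \deg(y,G) \;\geq\; m(G) - n(G) + 4.
\end{equation*}
Since $\delta(G) = 3$, no vertex of $F$ is isolated, so each component of $F$ is a tree on at least two vertices and contributes at least two leaves; each leaf $z$ of $F$ then satisfies $\deg(z,G) = 3$ and must be adjacent in $G$ to both $x$ and $y$. Consequently $x$ and $y$ share at least two common neighbours in $L$. A short calculation rules out both endpoints of $e$ lying in $L$ (that would force $m(G) \leq n(G) + 2$ against $m(G) \geq 3n(G)/2$ for $n(G) \geq 5$; the case $n(G) = 4$ forces $G = K_4$), and the independence of $H$ rules out both in $H$. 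Hence every double-$\col$-critical edge has exactly one endpoint $x \in H$ and one endpoint $y \in L$. Since $y$ has only two non-$x$ neighbours and two common low neighbours with $x$ must exist, both of $y$'s non-$x$ neighbours lie in $L \cap N(x)$; in particular $y$ has a unique high neighbour, namely $x$.

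For the counting, let $L_j \subseteq L$ consist of the low vertices with exactly $j$ high neighbours ($j \in \{0,1,2,3\}$). The previous step sends each double-$\col$-critical edge injectively to its low endpoint in $L_1$, so $|D| \leq |L_1|$, where $D$ denotes the set of double-$\col$-critical edges. Using $|E(L,H)| = |L_1| + 2|L_2| + 3|L_3|$ and $2|E(L,L)| + |E(L,H)| = 3|L|$, a direct computation yields $m(G) = (3|L_0| + 4|L_1| + 5|L_2| + 6|L_3|)/2$, whence
\begin{equation*}
\frac{m(G)}{2} - |L_1| \;=\; \frac{1}{4}\bigl(3|L_0| + 5|L_2| + 6|L_3|\bigr) \;\geq\; 0,
\end{equation*}
proving $|D| \leq m(G)/2$. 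Equality forces $L_0 = L_2 = L_3 = \emptyset$ and $|D| = |L_1|$; then $L$ partitions as $\bigsqcup_{x \in H} \bigl(N(x) \cap L\bigr)$ and each subgraph $G[N(x) \cap L]$ is $2$-regular, hence a disjoint union of cycles. Connectivity of $G$ then forces $|H| = 1$, and $\col$-criticality rules out multiple rim cycles (removing a single spoke in one cycle still leaves another wheel as a subgraph, which has $\delta \geq 3$). Hence $G$ is a single wheel.

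The main obstacle I anticipate is the rigidity step pinning down the neighbourhood of the low endpoint of a good edge: one must combine the tight degree budget $\deg(y,G) = 3$ with the lower bound of two common low neighbours to force $y$'s remaining two neighbours to lie in $N(x) \cap L$. Once that local structure is established, the counting identity and the component analysis in the equality case (ruling out multiple hubs or multiple rim cycles) are routine.
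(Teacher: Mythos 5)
Your argument is correct in its essentials but follows a genuinely different route from the paper's. The paper first proves a lemma that any two double-$\col$-critical edges of a $4$-$\col$-critical non-complete graph are incident (two disjoint such edges would force a $K_4$ through two leaves of the forest $G-V(e)$), so the double-$\col$-critical edges form either a triangle or a star at a vertex $v$; the triangle case is disposed of by $m(G)\geq 8$, and the star case by showing that $G-v$ is connected and contains a cycle, whence $m(G)\geq 2\deg(v,G)$. You instead pin down the local structure of each double-$\col$-critical edge (one endpoint of degree $3$ whose two remaining neighbours are common degree-$3$ neighbours of the other, high-degree, endpoint) and then run a counting identity over the partition of the degree-$3$ vertices by their number of high neighbours; I have checked the identity $m(G)=\bigl(3|L_0|+4|L_1|+5|L_2|+6|L_3|\bigr)/2$ and the injection of the set $D$ of double-$\col$-critical edges into $L_1$, and both are sound. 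Your method buys a little more: the extremal structure (a single hub joined to a $2$-regular graph, hence to a single cycle by criticality) falls out of the equality analysis, whereas the paper extracts it from the condition $\deg(v,G)=n(G)-1$.

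Two points need repair. First, your preliminary claim that $G$ has no cut vertex is false for $4$-$\col$-critical graphs in general: glue two copies of $K_4-e$ at a new vertex $x$ joined to the two degree-$2$ vertices of each copy; the resulting graph is $4$-$\col$-critical (its only subgraph of minimum degree at least $3$ is the graph itself) and $x$ is a cut vertex, so the parenthetical justification you give does not work. Fortunately you never actually use $2$-connectedness --- only connectedness, which does follow from criticality --- so the claim should simply be deleted. Second, the ``if and only if'' also requires the converse verification that in a wheel every spoke is double-$\col$-critical while no rim edge is, giving exactly $m(G)/2$ such edges; your proof stops at ``$G$ is a single wheel''. (Note also that your conclusion, like the closing sentence of the paper's own proof, yields a wheel on at least five vertices; $W_4=C_4+K_1$ does attain equality, so the bound ``order at least $6$'' in the statement appears to be a slip for ``order at least $5$''.)
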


\begin{lemma} \label{lem:col4_and_double_col_critical_edges}
If $e$ and $f$ are two double-$\col$-critical edges in a
$4$-$\col$-critical non-complete graph, then $e$ and $f$ are incident.
\end{lemma}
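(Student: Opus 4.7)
The plan is to argue by contradiction: suppose $e=xy$ and $f=uv$ are double-$\col$-critical edges of $G$ with $\{x,y\} \cap \{u,v\} = \emptyset$. Two structural facts about $G$ together with two forest conclusions will drive the argument. From Observation~\ref{obs:existenceOfColCriticalSubgraph}, $\delta(G) = \col(G) - 1 = 3$; and $G$ can contain no copy of $K_4$, since such a copy would be either all of $G$ (impossible, as $G$ is non-complete) or a proper subgraph of colouring number $4$ (contradicting the $4$-$\col$-criticality of $G$). Moreover, the definition of a double-$\col$-critical edge, combined with Observation~\ref{obs:3457089asdflkjh345}(iii), implies that both $G - x - y$ and $G - u - v$ are forests.

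The key object is the component $T$ of $G - x - y$ containing the edge $uv$. Any leaf $\ell$ of $T$ has exactly one neighbour inside $G - x - y$, so the requirement $\deg_G(\ell) \geq 3$ forces $\ell$ to be adjacent to both $x$ and $y$. I then plan to split on $|V(T)|$. If $|V(T)| = 2$, then $T$ is the single edge $uv$, whence $u$ and $v$ are both leaves of $T$ and hence each adjacent to both $x$ and $y$; this makes $\{x,y,u,v\}$ induce a $K_4$ in $G$, a contradiction. If $|V(T)| \geq 3$, then $u$ and $v$ cannot both be leaves of $T$ (for then, as $uv \in E(T)$, the unique $T$-neighbour of each would be the other, collapsing $T$ to $\{u,v\}$), so among the at least two leaves of $T$ there is some leaf $\ell$ with $\ell \notin \{u,v\}$; since also $\ell \in V(T) \subseteq V(G) \setminus \{x,y\}$, the vertex $\ell$ is distinct from all of $x,y,u,v$, and the triangle $\{\ell, x, y\}$ sits entirely inside $G - u - v$, contradicting that $G - u - v$ is a forest.

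If there is any obstacle to this plan, it is spotting the correct pairing — namely, to exploit the forest structure of $G - u - v$ against a leaf of the component $T$ inside $G - x - y$. Once that pairing is in place, the casework is short and purely structural, and no nontrivial computation is required beyond the elementary observation that a tree containing a prescribed edge $uv$ with both endpoints as leaves must itself be that edge.
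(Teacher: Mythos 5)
Your proof is correct and follows essentially the same route as the paper: both arguments use that $\delta(G)=3$ and the forest structure of $G-V(e)$ force every leaf of that forest to be adjacent to both ends of $e$, creating triangles that the second forest $G-V(f)$ cannot accommodate unless $f$ joins two such leaves, which yields a forbidden $K_4$. The only difference is organizational — you localise to the component of $G-V(e)$ containing $f$ and split on its size, whereas the paper picks two leaves of the whole forest and forces $f$ to be the edge between them — but the underlying idea is identical.
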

\begin{proof}
  Let $G$ denote a $\col$-critical graph with $\col(G) = 4$. Then, by
  Observation~\ref{obs:existenceOfColCriticalSubgraph},
  $\delta(G)=3$. We must have $\omega(G) \leq 3$, since $G$ is a
  $4$-$\col$-critical non-complete graph.

Suppose $e$ is an arbitrary double-$\col$-critical edge in $G$. Then
$\col(G - V(e)) = 2$ which, by
Observation~\ref{obs:3457089asdflkjh345}~(iii), means that $G - V(e)$
is a forest containing at least one edge and, since $\delta(G)=3$,
$\delta(G-V(e))\geq 1$ and each leaf in $G - V(e)$ is adjacent to both
endvertices of $e$ in $G$. Let $u$ and $v$ denote two leafs of $G -
V(e)$. Now, if $G$ contains some double-$\col$-critical edge $f$ which
is not incident to $e$, then $G - V(f)$ contains no cycles, since
$\col(G - V(f)) = 2$, and so $f$ is incident to both $u$ and $v$,
which implies $G[ \{u,v\} \cup V(e)] \simeq K_4$, a
contradiction. This means that any two double-$\col$-critical edges of
$G$ are incident, and the proof is complete.
\end{proof}

\begin{proof}[Proof of Proposition~\ref{prop:col4_and_double_col_critical_edges}] Let $G$
  denote a $4$-$\col$-critical non-complete graph. Then $n(G) \geq 5$
  and, by Observation~\ref{obs:existenceOfColCriticalSubgraph},
  $\delta(G) = 3$ which implies $m(G) \geq \lceil n(G) \cdot \delta(G)
  / 2 \rceil \geq 8$. By
  Lemma~\ref{lem:col4_and_double_col_critical_edges}, we only have to
  consider two cases: (i) $G$ contains three incident
  double-$\col$-critical edges $xy$, $yz$, and $xz$, or (ii) there is
  a vertex $v \in V(G)$ such that every double-$\col$-critical edge of
  $G$ is incident to $v$. If (i) holds, then, since $m(G) \geq 8$, the
  desired statement follows. Suppose (ii) holds. Then the number of
  double-$\col$-critical edges in $G$ is at most $\deg(v, G)$. We may
  assume that there is at least one double-$\col$-critical edge, say,
  $vw$ in $G$. Suppose $G - v$ is disconnected. Then, since
  $\delta(G)=3$, each component of $G - v$ has minimum degree at least
  $2$, and so, in particular, some component of $G- v - w$ has minimum
  degree at least $2$. This, however, contradicts the fact that $G - v
  - w$ is a forest. Hence $G - v$ is connected. By
  Observation~\ref{obs:ColDropsByOneExactly}, $\col(G - v) \geq 3$ and
  so, by Observation~\ref{obs:3457089asdflkjh345}~(iii), $G - v$
  contains a cycle. Hence $G-v$ is a connected graph with at least one
  cycle, and so $m(G-v) \geq n(G-v)$. Thus,
$$m(G) = \deg(v, G) + m(G-v) \geq \deg(v, G) + (n(G) - 1) \geq 2\deg(v,G)$$
which implies that the number of double-$\col$-critical edges is at
most $m(G)/2$ and that the number of double-$\col$-critical edges is equal
to $m(G)/2$ only if $\deg(v,G) = n(G)-1$ and $G-v$ is a cycle.

Conversely, if $G$ is a wheel on at least five vertices, then it is
easy to see that exactly $m(G)/2$ edges of $G$ are
double-$\col$-critical.  This completes the proof.
\end{proof}
Let $k$ denote some integer greater than $3$. Let $D_k$ denote the $2k$-cycle
with vertices labelled cyclically $v_0v_1\dots v_ku_{k-1}u_{k-2}\dots
u_1$. Let $F_k$ denote the graph
$$D_k^2 - u_1v_1 - u_{k-1}v_{k-1} + v_1v_{k-1} + u_1u_{k-1}$$ 
Figure~\ref{fig:F_5} depicts a drawing of $F_5$.
\begin{figure}
  \begin{center} 
\scalebox{0.6}{\input{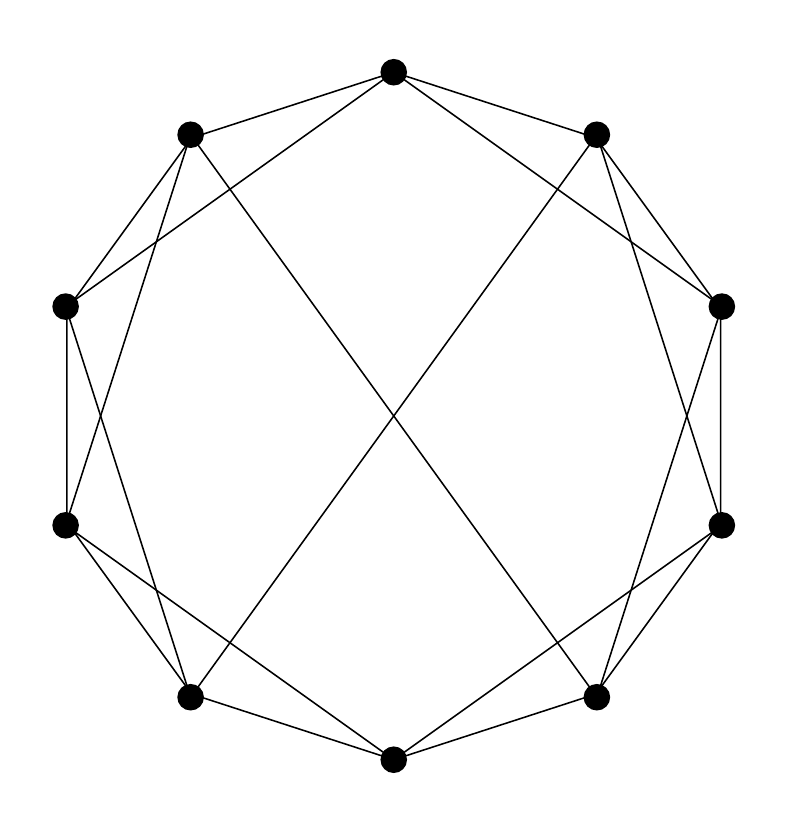_t}}
  \end{center}
  \caption{The graph $F_5$ has colouring number $5$ and all edges of $F_5$, except $u_1v_4$ and $v_1u_4$, are double-$\col$-critical.}
 \label{fig:F_5}
\end{figure}
\begin{observation} \label{obs:F_k}
For every integer $k$ greater than $3$, the graph $F_k$, as defined as
above, is a $5$-$\col$-critical graph with colouring number $5$ in
which all edges except $v_1v_{k-1}$ and $u_1 u_{k-1}$ are
double-$\col$-critical.
\end{observation}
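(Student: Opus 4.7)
My plan proceeds in three steps. First I would verify that $F_k$ is connected and $4$-regular: the modification $-u_1v_1 -u_{k-1}v_{k-1} +v_1v_{k-1} +u_1u_{k-1}$ removes one edge and adds one edge at each of the four vertices $v_1, v_{k-1}, u_1, u_{k-1}$ (the added chords being distinct from existing edges for $k \geq 5$, since the cyclic distance in $D_k$ between $v_1$ and $v_{k-1}$ and between $u_1$ and $u_{k-1}$ is $k-2 \geq 3$), so all degrees remain equal to $4$; and the removed edges are distance-$2$ chords, so the cycle $D_k$ still sits inside $F_k$, which is therefore connected. The remark from the introduction that every connected $r$-regular graph is $(r+1)$-$\col$-critical then gives $\col(F_k) = 5$ and $5$-$\col$-criticality at once.

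Second, I would show that the two named edges are not double-$\col$-critical. By the reflection of $F_k$ sending $v_i \leftrightarrow u_i$ for $1 \leq i \leq k-1$ and fixing $v_0, v_k$, it suffices to treat one of them, say the one whose removal leaves the vertex set $U := \{u_1, \ldots, u_{k-1}\}$ intact. In the induced subgraph $F_k[U]$, every interior $u_i$ with $3 \leq i \leq k-3$ retains its four $D_k^2$-neighbors $u_{i\pm 1}, u_{i\pm 2}$; each of $u_2$ and $u_{k-2}$ retains three such neighbors; and each of $u_1, u_{k-1}$ retains two $D_k^2$-neighbors and is joined to the other via the surviving chord $u_1u_{k-1}$. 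Hence $F_k[U]$ has minimum degree at least $3$, and by~\eqref{eq:u8934jkasf3498}, $\col(F_k - V(e)) \geq 4 > \col(F_k) - 2$, so $e$ is not double-$\col$-critical.

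For every remaining edge $e$, I would show that $F_k - V(e)$ is $2$-degenerate by writing down an explicit vertex ordering under which one always peels off a vertex of degree at most $2$. The automorphism group of $F_k$ generated by the reflection above and the rotation-by-$k$ that swaps the two sides in a twisted way cuts these edges into only a handful of orbits, with representatives such as $v_iv_{i+1}$, $v_iv_{i+2}$, $v_0v_1$, $v_0u_1$, $v_0v_2$, and $v_0u_2$. For each representative, removing both endpoints of $e$ fractures the cyclic structure and leaves a short path-like residue whose exposed ends supply degree-$\leq 2$ vertices for peeling. The main obstacle is exactly those cases where $e$ lies near the modification sites $v_1, v_{k-1}, u_1, u_{k-1}$, because the added chords could in principle keep some remaining degree artificially high; the argument works because in each such case at least one endpoint of each chord lies in $V(e)$ or is itself peeled off early in the elimination, so no obstruction analogous to the $U$-configuration of the second step survives.
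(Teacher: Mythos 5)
The paper offers no proof of this observation, so your argument has to stand on its own, and there is a genuine problem with it: your Step 2 proves too much, and in doing so it refutes your Step 3. The induced subgraph $F_k[U]$ of minimum degree $3$ that you exhibit (using the surviving chord $u_1u_{k-1}$) survives the deletion of $V(e)$ not only for $e=v_1v_{k-1}$ but for \emph{every} edge $e$ disjoint from $U=\{u_1,\dots,u_{k-1}\}$, e.g.\ $e=v_2v_3$ or $e=v_0v_1$. Since the only $D_k^2$-edges between $\{v_1,\dots,v_{k-1}\}$ and $U$ are $u_1v_1$ and $u_{k-1}v_{k-1}$, both of which are deleted, and the added chords stay within one side each, every edge of $F_k:=D_k^2-u_1v_1-u_{k-1}v_{k-1}+v_1v_{k-1}+u_1u_{k-1}$ misses at least one of the two sets $\{v_1,\dots,v_{k-1}\}$, $\{u_1,\dots,u_{k-1}\}$, and each of these induces a subgraph of minimum degree $3$ by exactly your computation. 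Hence for the graph defined by the displayed formula \emph{no} edge is double-$\col$-critical, and Step 3 cannot be carried out; your closing claim that ``at least one endpoint of each chord lies in $V(e)$ or is itself peeled off early'' fails already for $e=v_2v_3$, since $u_1$ and $u_{k-1}$ sit inside an induced subgraph of minimum degree $3$ and can never be peeled. The resolution is that the formula in the paper is inconsistent with the caption of Figure~\ref{fig:F_5}, which names $u_1v_4$ and $v_1u_4$ as the exceptional edges of $F_5$: the intended graph is $D_k^2-u_1v_1-u_{k-1}v_{k-1}+v_1u_{k-1}+u_1v_{k-1}$, whose added chords cross between the two sides. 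For that graph the two $K_4$-like obstructions disappear, the peeling strategy of your Step 3 does go through (I checked several representative edges for $k=5$), and the witness for the non-double-criticality of an added chord, say $v_1u_{k-1}$, is simply that the whole graph minus $\{v_1,u_{k-1}\}$ has minimum degree $3$ -- so your Step 2 needs to be rewritten as well.

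Two further points. First, even granting the corrected graph, Step 3 is a plan rather than a proof: you list no verified orbit representatives and no elimination orderings, and since the truth of the statement hinges precisely on the delicate interaction between the deleted and added chords -- as the collapse above demonstrates -- this is where the real work lies and cannot be waved through. Second, your regularity argument in Step 1 is explicitly restricted to $k\ge 5$; for $k=4$ the added chords coincide with edges already present in $D_4^2$, the graph fails to be $4$-regular, and its colouring number is $4$, so the statement as printed is also false at $k=4$ (harmless for the paper, since Proposition~\ref{prop:G_p_k} only needs large $k$, but worth flagging rather than passing over silently).
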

\begin{proposition} \label{prop:G_p_k}
  For each integer $p$ greater than $4$ and positive real number
  $\epsilon$, there is a $p$-$\col$-critical graph $G$ with the ratio
  of double-$\col$-critical edges between $1- \epsilon$ and $1$.
\end{proposition}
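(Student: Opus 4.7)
The plan is to boost the graphs $F_k$ from Observation~\ref{obs:F_k} to arbitrary colouring number $p\ge 5$ by taking a complete join with an independent set. Concretely, set $G_k:=F_k+\overline{K_{p-5}}$ (which collapses to $F_k$ itself when $p=5$) for $k$ sufficiently large, and aim to show that $G_k$ is $p$-$\col$-critical with ratio of double-$\col$-critical edges equal to $1-\frac{1}{k(p-3)}$, which can be made to lie in $(1-\epsilon,1)$.

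Since $F_k$ is $\col$-critical (and hence $\col$-vertex-critical) with $\col(F_k)=\delta(F_k)+1=5$, and $\overline{K_{p-5}}$ trivially satisfies $\col=\delta+1=1$, Proposition~\ref{prop:completeJoinsAndColouringNumber} yields
$$\col(G_k)=\min\{5+(p-5),\,1+2k\}=p$$
as soon as $k\ge (p-1)/2$. To check $\col$-criticality I would distinguish two cases. For $e\in E(F_k)$, the $\col$-criticality of $F_k$ gives $\col(F_k-e)\le 4$, and the upper bound in Proposition~\ref{prop:completeJoinsAndColouringNumber} yields $\col(G_k-e)\le 4+(p-5)=p-1$. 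For a cross edge $e=xy$ with $x\in V(F_k)$, one has $\deg_{G_k-e}(x)=p-2$, so any subgraph $J\subseteq G_k-e$ with $\delta(J)\ge p-1$ must exclude $x$, forcing $J\subseteq G_k-x=(F_k-x)+\overline{K_{p-5}}$; but $\col(F_k-x)\le 4$ implies $\col((F_k-x)+\overline{K_{p-5}})\le p-1$, precluding such $J$.

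To count the double-$\col$-critical edges of $G_k$, I would argue as follows. For each of the $4k-2$ edges $e\in E(F_k)$ that are double-$\col$-critical in $F_k$, $\col(F_k-V(e))\le 3$ gives $\col(G_k-V(e))\le 3+(p-5)=p-2$, so $e$ remains double-$\col$-critical in $G_k$. For each of the $2k(p-5)$ cross edges $e=xy$, the graph $G_k-V(e)=(F_k-x)+\overline{K_{p-6}}$ has $\col\le 4+(p-6)=p-2$. This accounts for $(4k-2)+2k(p-5)=2k(p-3)-2$ double-$\col$-critical edges out of the $4k+2k(p-5)=2k(p-3)$ total edges of $G_k$, i.e.\ a ratio of $1-\frac{1}{k(p-3)}$. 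Picking $k\ge\max\bigl\{4,\,\lceil(p-1)/2\rceil,\,\lceil 1/(\epsilon(p-3))\rceil+1\bigr\}$ forces this ratio into the desired interval.

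The step I expect to be the main obstacle is the $\col$-criticality of the cross edges: the upper bound in Proposition~\ref{prop:completeJoinsAndColouringNumber} alone does not push $\col(G_k-e)$ strictly below $p$, so one has to descend to the level of subgraphs with minimum degree at least $p-1$ and exploit that the $F_k$-endpoint is excluded from any such witness. Everything else amounts to bookkeeping with Proposition~\ref{prop:completeJoinsAndColouringNumber} and the $\col$-criticality of $F_k$.
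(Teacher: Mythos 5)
Your construction is exactly the paper's: for $p>5$ take $F_k+\overline{K_{p-5}}$ with $k$ large (and $F_k$ itself for $p=5$), then observe that all edges except $v_1v_{k-1}$ and $u_1u_{k-1}$ are double-$\col$-critical, so the ratio $1-\tfrac{1}{k(p-3)}$ tends to $1$. The only difference is that the paper certifies $p$-$\col$-criticality by citing Proposition~\ref{prop:completeJoinsAndColCritical}~(ii) while you verify it directly (correctly) via the degree of the $F_k$-endpoint of a cross edge; this is the same approach with one lemma unpacked by hand.
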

\begin{proof}
If $p=5$, the desired result follows directly from
Observation~\ref{obs:F_k} by letting $k$ tend to infinity. Let $p$
denote an integer greater than $5$ and $\epsilon$ a positive real
number. Let $k$ denote an integer a lot greater than $p$, and let $G$
denote the graph obtained by taking the complete join of $F_k$ and
$\overline{K_{p-5}}$. Then, by
Proposition~\ref{prop:completeJoinsAndColCritical}~(ii), $G$ is
$p$-$\col$-critical, and, since $k$ is a lot greater than $p$, all but
the edges $v_1v_{k-1}$ and $u_1 u_{k-1}$ are double-$\col$-critical in
$G$. By letting $k$ tend to infinity the ratio of
double-$\col$-critical edges in $G$ will from a certain point
onwards be between $1 - \epsilon$ and $1$. This completes the
argument.
\end{proof}
Proposition~\ref{prop:G_p_k} means that there is no result
corresponding to
Proposition~\ref{prop:col4_and_double_col_critical_edges} for
colouring numbers greater than $4$.
\section{Concluding remarks}
By a theorem of Mozhan~\cite{MR995391} and, independently,
Stiebitz~\cite{MR882614}, $K_5$ is the only double-critical graph with
chromatic number $5$, that is, $K_5$ is the only $5$-chromatic graph
with $100$\% double-critical edges, but we do not know whether there
are non-complete $5$-chromatic graphs with the percentage of
double-critical edges arbitrarily close to a
$100$. In~\cite{KawarabayashiPedersenToftEJC2010}, Kawarabayashi, the
second author, and Toft conjectured that if $G$ is a $5$-critical
non-complete graph, then $G$ contains at most $(2 + \frac{1}{3n(G) -
  5}) \frac{m(G)}{3}$ double-critical edges.

As we have seen, the story is a bit different for the colouring number. By
Proposition~\ref{prop:G_p_k} for $p=5$, there are non-complete graphs
with colouring number $5$ with the percentage of
double-$\col$-critical edges arbitrarily close to a $100$. This only
makes Theorem~\ref{th:doubleCriticalColouringNumber5} all the more
interesting. By Theorem~\ref{th:doubleCriticalColouringNumber5}, we
are able to distinguish between graphs with colouring number $5$
having $99.99$\% double-$\col$-critical edges and those that have a
$100$\% double-$\col$-critical edges.

The problem of obtaining a concise structural description of the
double-$\col$-critical graphs with colouring number $k \geq 6$ remains
open. Given any graph $G$, it can be decided in polynomial time
whether or not $G$ is double-$\col$-critical, since the colouring
number itself can be computed in polynomial time. Nevertheless, given
the structural complexity of the double-$\col$-critical graphs with
colouring number $6$ mentioned on
page~\pageref{page:doubleColCritical_col_6}, it seems likely that even
the problem of obtaining a concise structural description of the
double-$\col$-critical graphs with colouring number $6$ is
non-trivial.

\section*{Acknowledgement}
We thank Bjarne Toft for posing the problem of characterising the
double-$\col$-critical graphs and his many insightful comments on colouring and degeneracy of graphs.

\bibliographystyle{plainnat} \bibliography{double-col}

\begin{thebibliography}{15}
\providecommand{\natexlab}[1]{#1}
\providecommand{\url}[1]{\texttt{#1}}
\expandafter\ifx\csname urlstyle\endcsname\relax
  \providecommand{\doi}[1]{doi: #1}\else
  \providecommand{\doi}{doi: \begingroup \urlstyle{rm}\Url}\fi

\bibitem[Bondy and Murty(2008)]{BondyAndMurty2008}
J.~A. Bondy and U.~S.~R. Murty.
\newblock \emph{Graph theory}, volume 244 of \emph{Graduate Texts in
  Mathematics}.
\newblock Springer, New York, 2008.

\bibitem[Erd\H{o}s(1968)]{TihanyProblem2}
P.~Erd\H{o}s.
\newblock Problem 2.
\newblock In \emph{Theory of Graphs (Proc. Colloq., Tihany, 1966)}, page 361.
  Academic Press, New York, 1968.

\bibitem[Erd{\H{o}}s and Hajnal(1966)]{MR0193025}
P.~Erd{\H{o}}s and A.~Hajnal.
\newblock On chromatic number of graphs and set-systems.
\newblock \emph{Acta Math. Acad. Sci. Hungar}, 17:\penalty0 61--99, 1966.

\bibitem[Jensen and Toft(1995)]{JensenToft95}
T.~R. Jensen and B.~Toft.
\newblock \emph{Graph coloring problems}.
\newblock Wiley-Interscience Series in Discrete Mathematics and Optimization.
  John Wiley \& Sons Inc., New York, 1995.
\newblock A Wiley-Interscience Publication.

\bibitem[Kawarabayashi et~al.(2010)Kawarabayashi, Pedersen, and
  Toft]{KawarabayashiPedersenToftEJC2010}
K.~Kawarabayashi, A.~S. Pedersen, and B.~Toft.
\newblock Double-critical graphs and complete minors.
\newblock \emph{Electron. J. Combin.}, 17\penalty0 (1):\penalty0 Research Paper
  87, 27 pp., 2010.

\bibitem[Kriesell(2002)]{MR1892444}
M.~Kriesell.
\newblock A survey on contractible edges in graphs of a prescribed vertex
  connectivity.
\newblock \emph{Graphs Combin.}, 18\penalty0 (1):\penalty0 1--30, 2002.

\bibitem[Kriesell(2011)]{Kriesell:1325227}
M.~Kriesell.
\newblock Nonseparating {$K_4$}-subdivisions in graphs of minimum degree at
  least $4$.
\newblock Technical Report arXiv:1101.5278, Jan 2011.
\newblock IMADA-preprint, 25 pages.

\bibitem[Lick and White(1970)]{MR0266812}
D.~R. Lick and A.~T. White.
\newblock {$k$}-degenerate graphs.
\newblock \emph{Canad. J. Math.}, 22:\penalty0 1082--1096, 1970.

\bibitem[Matula and Beck(1983)]{MR709826}
D.~W. Matula and L.~L. Beck.
\newblock Smallest-last ordering and clustering and graph coloring algorithms.
\newblock \emph{J. Assoc. Comput. Mach.}, 30\penalty0 (3):\penalty0 417--427,
  1983.

\bibitem[Mozhan(1987)]{MR995391}
N.~N. Mozhan.
\newblock Twice critical graphs with chromatic number five.
\newblock \emph{Metody Diskret. Analiz.}, \penalty0 (46):\penalty0 50--59, 73,
  1987.

\bibitem[Pedersen(2010)]{ASP_PhD_Thesis}
A.~S. Pedersen.
\newblock \emph{Contributions to the Theory of Colourings, Graph Minors, and
  Independent Sets}.
\newblock PhD thesis, University of Southern Denmark, 2010.

\bibitem[Stiebitz(1987)]{MR882614}
M.~Stiebitz.
\newblock {$K\sb 5$} is the only double-critical {$5$}-chromatic graph.
\newblock \emph{Discrete Math.}, 64\penalty0 (1):\penalty0 91--93, 1987.

\bibitem[Stiebitz(1988)]{MR1221590}
M.~Stiebitz.
\newblock On {$k$}-critical {$n$}-chromatic graphs.
\newblock In \emph{Combinatorics (Eger, 1987)}, volume~52 of \emph{Colloq.
  {M}ath. {S}oc. {J}\'anos {B}olyai}, pages 509--514. North-Holland, Amsterdam,
  1988.

\bibitem[Stiebitz(1996)]{MR1413661}
M.~Stiebitz.
\newblock Decomposing graphs under degree constraints.
\newblock \emph{J. Graph Theory}, 23\penalty0 (3):\penalty0 321--324, 1996.

\bibitem[Toft(1995)]{MR1373659}
B.~Toft.
\newblock Colouring, stable sets and perfect graphs.
\newblock In \emph{Handbook of combinatorics, {V}ol.\ 1,\ 2}, pages 233--288.
  Elsevier, Amsterdam, 1995.

\end{thebibliography}
\end{document}